\theoremstyle{plain}
\newtheorem{theorem}{Theorem}[section]
\newtheorem{lemma}{Lemma}[section]
\theoremstyle{definition}
\newtheorem{claim}{Claim}[section]
\newtheorem{case}{Case}
\newtheorem{question}{Question}[section]
\newtheorem{example}{Example}[section]
\theoremstyle{remark}
\newtheorem{remark}{Remark}[section]
\title{The Dyn-Farkhi conjecture and the convex hull of a sumset in two dimensions}
\author{Mark Meyer}
\begin{document}
\maketitle
\begin{abstract}
    For a compact set $A$ in $\mathbb{R}^n$ the Hausdorff distance from $A$ to $\textup{conv}(A)$ is defined by 
    \begin{equation*}
        d(A):=\sup_{a\in\textup{conv}(A)}\inf_{x\in A}|x-a|,
    \end{equation*}
    where for $x=(x_1,\dots,x_n)\in\mathbb{R}^n$ we use the notation $|x|=\sqrt{x_1^2+\dots+x_n^2}$. It was conjectured in 2004 by Dyn and Farkhi that $d^2$ is subadditive on compact sets in $\mathbb{R}^n$. In 2018 this conjecture was proved false by Fradelizi et al. when $n\geq3$. The conjecture can also be verified when $n=1$. In this paper we prove the conjecture when $n=2$ and in doing so we prove an interesting representation of the sumset $\textup{conv}(A)+\textup{conv}(B)$ for full dimensional compact sets $A,B$ in $\mathbb{R}^2$.
\end{abstract}

\section{Introduction}

For a compact set $A$ in $\mathbb{R}^n$ define the Hausdorff distance from $A$ to $\textup{conv}(A)$ by
\begin{equation*}
    d(A):=\sup_{a\in \textup{conv}(A)}\inf_{x\in A}|x-a|,
\end{equation*}
where for $x=(x_1,\dots,x_n)\in\mathbb{R}^n$ we use the notation $|x|=\sqrt{x_1^2+\dots+x_n^2}$. It was conjectured by Dyn and Farkhi \cite{dyn1} that for compact sets $A$ and $B$ in $\mathbb{R}^n$,
\begin{equation*}
    d^2(A+B)\leq d^2(A)+d^2(B).
\end{equation*}
For a couple of reasons it is natural to make this conjecture. In \cite{cassels1} Cassels studied the effective standard deviation, defined by
\begin{equation*}
    v^2(A)=\sup_{x\in\textup{conv}(A)}\inf\{\sum p_i|a_i-x|^2:x=\sum p_ia_i;p_i>0;\sum p_i=1,a_i\in A\}.
\end{equation*}
It was observed that $v^2$ is subadditive:
\begin{theorem}[Cassels \cite{cassels1}]\label{cassels_theorem}
    Let $A$ and $B$ be compact sets in $\mathbb{R}^n$. Then
    \begin{equation*}
        v^2(A+B)\leq v^2(A)+v^2(B).
    \end{equation*}
\end{theorem}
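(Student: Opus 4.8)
The plan is to work directly with the variational definition of $v^2$ and exploit a product (tensor) construction on the representing convex combinations. First I would record the elementary identity that for any convex combination $x=\sum_i p_ia_i$ one has $\sum_i p_i|a_i-x|^2=\sum_i p_i|a_i|^2-|x|^2$; this shows that the inner infimum in the definition of $v^2(A)$ is really a minimal ``variance'' over probability weights on $A$ whose barycenter is $x$, and in particular that for \emph{every} $x\in\conv(A)$ this infimum is at most $v^2(A)$ (and likewise for $B$).

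Next, fix $\varepsilon>0$ and a point $z\in\conv(A+B)$. Using the standard fact that $\conv(A+B)=\conv(A)+\conv(B)$, write $z=x+y$ with $x\in\conv(A)$ and $y\in\conv(B)$. Choose near-optimal representations $x=\sum_i p_ia_i$ and $y=\sum_j q_jb_j$, with $a_i\in A$ and $b_j\in B$, such that $\sum_i p_i|a_i-x|^2\le v^2(A)+\varepsilon$ and $\sum_j q_j|b_j-y|^2\le v^2(B)+\varepsilon$. The product weights $p_iq_j$ then furnish the representation $z=\sum_{i,j}p_iq_j(a_i+b_j)$ with each $a_i+b_j\in A+B$, so its barycenter is indeed $z$.

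Now I would bound the inner infimum for $A+B$ at $z$ by plugging in this product representation and expanding $|(a_i+b_j)-z|^2=|(a_i-x)+(b_j-y)|^2$. The cross term sums to $\langle\sum_i p_i(a_i-x),\,\sum_j q_j(b_j-y)\rangle$, which vanishes because $x$ and $y$ are the respective barycenters; this cancellation is the crux of the argument and is precisely why the \emph{square} $v^2$, rather than $v$ itself, is subadditive. Consequently the inner infimum at $z$ is at most $\sum_i p_i|a_i-x|^2+\sum_j q_j|b_j-y|^2\le v^2(A)+v^2(B)+2\varepsilon$. Letting $\varepsilon\to 0$ and taking the supremum over $z\in\conv(A+B)$ gives $v^2(A+B)\le v^2(A)+v^2(B)$.

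There is no serious analytic obstacle: compactness of $A$ and $B$, hence of their convex hulls, together with Carathéodory's theorem, guarantees that the relevant infima are attained, but the $\varepsilon$-approximation above makes attainment unnecessary. The only points needing care are the identity $\conv(A+B)=\conv(A)+\conv(B)$, the bookkeeping in the expansion of $|(a_i-x)+(b_j-y)|^2$, and the verification that the product weights produce a representation of $z$ with barycenter exactly $z$ — after which the vanishing cross term makes the two variances add with no error.
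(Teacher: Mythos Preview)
The paper does not supply its own proof of this statement: Theorem~\ref{cassels_theorem} is quoted from Cassels~\cite{cassels1} as background and is never argued in the text. So there is no ``paper's proof'' to compare against.

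That said, your argument is correct and is essentially the classical one. The three ingredients you isolate are exactly what is needed: the identity $\conv(A+B)=\conv(A)+\conv(B)$ to decompose an arbitrary $z\in\conv(A+B)$ as $x+y$; the product coupling $p_iq_j$ on the points $a_i+b_j\in A+B$, which has barycenter $x+y=z$; and the vanishing of the cross term $2\sum_{i,j}p_iq_j\langle a_i-x,\,b_j-y\rangle=2\langle 0,0\rangle$, which is precisely why the \emph{squared} quantity is subadditive. Your $\varepsilon$-approximation cleanly sidesteps any question of whether the inner infimum is attained, and the observation that the inner infimum at every $x\in\conv(A)$ is bounded by $v^2(A)$ (since $v^2(A)$ is the supremum over such $x$) is exactly the right way to feed the two pieces into the product construction. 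Nothing is missing.
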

In the paper \cite{wegmann1} Wegmann observed that $v(A)=d(A)$ if the supremum in the definition of $v(A)$ is achieved in the relative interior of $\textup{conv}(A)$. Then for such compact sets $A$ and $B$ we have $d^2(A+B)\leq d^2(A)+d^2(B)$. Another inequality of interest follows from the Shapley-Folkman-Starr theorem \cite{starr1,starr2}, which shows that in the Hausdorff metric large sums tend towards convex sets (as long as the diameters of the sets are bounded). For compact $A\subseteq\mathbb{R}^n$, define the radius of the smallest ball containing $A$ by
\begin{equation*}
    \textup{rad}(A):=\inf_{x\in\mathbb{R}^n}\sup_{a\in A}|x-a|.
\end{equation*}
The following theorem relates the functions $d(A)$ and $\textup{rad}(A)$.
\begin{theorem}[Starr \cite{starr1}]\label{starr_theorem}
Let $A$ and $B$ be compact sets in $\mathbb{R}^n$. Then
\begin{equation*}
    d^2(A+B)\leq \textup{rad}^2(A)+\textup{rad}^2(B).
\end{equation*}
\end{theorem}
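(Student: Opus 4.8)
The plan is to prove the pointwise statement that every $z\in\conv(A+B)$ lies within distance $\sqrt{\textup{rad}^2(A)+\textup{rad}^2(B)}$ of $A+B$, and then take the supremum over $z$. I would begin with two standard reductions. Since $d$ and $\textup{rad}$ are translation invariant, and translating $A$ and $B$ separately changes neither $d(A+B)$ nor $\textup{rad}^2(A)+\textup{rad}^2(B)$, I may assume the smallest enclosing ball of $A$ is centered at the origin and likewise for $B$; writing $r_A=\textup{rad}(A)$, $r_B=\textup{rad}(B)$ (the infimum defining $\textup{rad}$ is attained, as $x\mapsto\max_{a\in A}|x-a|$ is continuous and coercive), this gives $|x|\le r_A$ for all $x\in\conv(A)$ and $|y|\le r_B$ for all $y\in\conv(B)$. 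Since $\conv(A+B)=\conv(A)+\conv(B)$, a given $z\in\conv(A+B)$ can then be written as $z=x+y$ with $x\in\conv(A)$ and $y\in\conv(B)$, so in particular $|x|\le r_A$ and $|y|\le r_B$.

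The engine of the argument is an elementary one-set estimate. Suppose $C\subseteq\mathbb{R}^n$ is compact with $|c|\le r$ for all $c\in C$, let $u\in\conv(C)$, and let $p\in\mathbb{R}^n$ be arbitrary. Choose $c\in C$ maximizing $c'\mapsto\langle p,c'\rangle$ (possible by compactness). Since $u$ is a convex combination of points of $C$ we get $\langle p,c\rangle\ge\langle p,u\rangle$, and hence
\[
|p-c|^2=|p|^2-2\langle p,c\rangle+|c|^2\le|p|^2-2\langle p,u\rangle+r^2=|p-u|^2+r^2-|u|^2.
\]
In words: for any $p$ there is a point of $C$ whose squared distance to $p$ exceeds that of $u$ by at most $r^2-|u|^2$.

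I would then apply this estimate twice, in a chained fashion. First apply it with $C=A$, $u=x$, and $p=x$: this produces $a\in A$ with $|x-a|^2\le r_A^2-|x|^2$. Next apply it with $C=B$, $u=y$, and $p=z-a=(x-a)+y$, noting $p-y=x-a$: this produces $b\in B$ with
\[
|z-a-b|^2=|p-b|^2\le|p-y|^2+r_B^2-|y|^2=|x-a|^2+r_B^2-|y|^2\le r_A^2+r_B^2-|x|^2-|y|^2\le r_A^2+r_B^2.
\]
Since $a+b\in A+B$, this gives $\inf_{w\in A+B}|z-w|^2\le r_A^2+r_B^2$, and taking the supremum over $z\in\conv(A+B)$ yields $d^2(A+B)\le\textup{rad}^2(A)+\textup{rad}^2(B)$.

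The only non-routine idea, and the place I expect the real difficulty to lie, is the chaining in the last step. The naive approach is to pick $a\in A$ near $x$ and $b\in B$ near $y$ independently and use the triangle inequality, but $\bigl(\sqrt{r_A^2-|x|^2}+\sqrt{r_B^2-|y|^2}\bigr)^2$ can be nearly $2(r_A^2+r_B^2)$, losing a factor of two. Feeding the residual $x-a$ into the target direction for the second application forces the two errors to combine with a Pythagorean-type cancellation (the cross term is annihilated by the maximality of $b$) rather than additively. It is worth noting that the Shapley-Folkman lemma gives nothing directly here: for two summands in $\mathbb{R}^n$ with $n\ge2$ it permits both summands to be non-extreme, so a hands-on argument of this type appears to be needed.
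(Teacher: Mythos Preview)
Your proof is correct. The one-set estimate is exactly right: maximizing $\langle p,\cdot\rangle$ over $C$ gives $\langle p,c\rangle\ge\langle p,u\rangle$ for any $u\in\conv(C)$, and combined with $|c|^2\le r^2$ this yields $|p-c|^2\le|p-u|^2+r^2-|u|^2$. The chaining then goes through cleanly, and your remark about why the naive triangle-inequality approach loses a factor of two is well taken.

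As for comparison with the paper: there is nothing to compare. The paper does not prove Theorem~\ref{starr_theorem}; it merely quotes it from Starr~\cite{starr1} as background and notes that it is the two-summand special case of a stronger result for arbitrary finite sums. Your argument is in fact essentially Starr's original argument specialized to two summands: the ``one-set estimate'' you isolate is the core lemma in the Shapley--Folkman--Starr circle of ideas (it also underlies Cassels' proof of Theorem~\ref{cassels_theorem}, since the right-hand side $r^2-|u|^2$ with $u$ the circumcenter of a simplex equals the effective variance at that point). So you have independently reconstructed the standard proof; the extension to $m$ summands is immediate by iterating the chaining step $m$ times, which recovers the full Shapley--Folkman--Starr bound $d^2(\sum A_i)\le\sum\textup{rad}^2(A_i)$.
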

We note that Starr actually proved a better result for large sums, and Theorem \ref{starr_theorem} is the special case where only two sets are added together. In the same paper it was observed that $d(A)\leq \textup{rad}(A)$ for any compact $A$. With these results in mind, it seems obvious that $d^2$ should be subadditive. It turns out that this is not true. In \cite{fradelizi1} Fradelizi et al. proved a counter example when $n\geq3$:
\begin{theorem}[Fradelizi et al. \cite{fradelizi1}]
    Let $q\geq0$. The inequality
    \begin{equation*}
        d^q(A+B)\leq d^q(A)+d^q(B)
    \end{equation*}
    holds for all compact sets $A,B\subseteq\mathbb{R}^3$ if and only if $q\leq 1$.
\end{theorem}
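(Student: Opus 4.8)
The plan is to prove the two implications separately, since the ``if'' part is soft and the ``only if'' part is the substance.

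For ``if'' ($q\le 1$) I would first record the subadditivity of $d$ itself, $d(A+B)\le d(A)+d(B)$. Using $\conv(A+B)=\conv(A)+\conv(B)$, write an arbitrary $w\in\conv(A+B)$ as $w=a'+b'$ with $a'\in\conv(A)$, $b'\in\conv(B)$; by compactness pick $x'\in A$, $y'\in B$ with $|x'-a'|=\mathrm{dist}(a',A)\le d(A)$ and $|y'-b'|=\mathrm{dist}(b',B)\le d(B)$; then $x'+y'\in A+B$ and $|w-(x'+y')|\le|a'-x'|+|b'-y'|\le d(A)+d(B)$, so $\mathrm{dist}(w,A+B)\le d(A)+d(B)$, and the supremum over $w$ gives the claim. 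Next, for $0<q\le1$ the map $t\mapsto t^{q}$ is concave on $[0,\infty)$ and vanishes at $0$, hence subadditive (reduce by homogeneity to $s+t=1$ and use $s^{q}\ge s$, $t^{q}\ge t$); combining, $d(A+B)^{q}\le(d(A)+d(B))^{q}\le d(A)^{q}+d(B)^{q}$. The degenerate case $q=0$ is immediate, since $d^{0}$ is essentially the indicator of non-convexity and a Minkowski sum of convex sets is convex.

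For ``only if'' I must exhibit, for every $q>1$, compact $A,B\subseteq\mathbb{R}^{3}$ with $d(A+B)^{q}>d(A)^{q}+d(B)^{q}$. The elementary point is that for $q>1$ the map $t\mapsto t^{q}$ is \emph{strictly superadditive} on $(0,\infty)$, so it suffices to make $d(A+B)$ close enough to $d(A)+d(B)$ — and how close is needed degrades as $q\downarrow1$, so a single example will not do: one wants a family. The cleanest target I would aim for is a one-parameter family of sets $A_{R},B\subseteq\mathbb{R}^{3}$ with $d(B)$ \emph{fixed}, $d(A_{R})=R\to\infty$, and $d(A_{R}+B)\ge R+c$ for a \emph{fixed} $c>0$: then $d(A_{R}+B)^{q}-d(A_{R})^{q}\ge(R+c)^{q}-R^{q}\sim qcR^{q-1}\to\infty$, which eventually beats the fixed quantity $d(B)^{q}$, and this disposes of all $q>1$ simultaneously. (A symmetric family with $d(A)=d(B)=1$ and $d(A+B)\to2$ would serve equally well.)

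Producing such a family is the real obstacle, and it must be genuinely three-dimensional. If $A,B$ lay in a common plane then $A+B$ would too and $d$ would be computed inside $\mathbb{R}^{2}$, where the main theorem of the present paper ($d^{2}$ subadditive, hence $d^{q}$ subadditive for all $q\le2$ by writing $t^{q}=(t^{2})^{q/2}$ with $q/2\le1$) forbids a counterexample for $1<q\le2$; and the most obvious three-dimensional examples — a ``flat'' $A$ with $B$ sticking out perpendicular to it — satisfy a Pythagorean identity $d(A+B)=\sqrt{d(A)^{2}+d(B)^{2}}$, for which $d(A+B)-d(A)\to0$ as $d(A)\to\infty$, far too weak to give $d(A_{R}+B)\ge R+c$. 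To beat this one wants $A$ full-dimensional with a \emph{deepest} point $a^{*}$ on the boundary of $\conv(A)$ that is \emph{not} in the convex hull of its set of nearest points in $A$ (so that $\mathrm{dist}(\cdot,A)$ is locally increasing at $a^{*}$ in a direction blocked only by $\conv(A)$ and not by $A$ itself), together with a small set $B$ whose addition opens up that direction while $A+B$ still leaves a deep hole near $a^{*}+b$ for the relevant $b\in B$. Writing down an explicit pair $A,B$ with these features is precisely the Fradelizi et al.\ construction and is where the work lies; once a candidate is fixed, the verification reduces to locating the deepest point of the convex hull of each of $A$, $B$ and $A+B$ — a finite, hands-on optimisation.
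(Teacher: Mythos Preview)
This theorem is not proved in the present paper; it is quoted from \cite{fradelizi1} as background for the introduction, so there is no proof here to compare your proposal against.

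On the proposal itself: the ``if'' direction is correct and complete --- subadditivity of $d$ via the decomposition $w=a'+b'$ together with subadditivity of $t\mapsto t^{q}$ for $0<q\le 1$ is exactly the standard argument. The ``only if'' direction, however, is not a proof but a strategy. You correctly identify that a single example cannot cover all $q>1$ and that a one-parameter family with $d(A_{R}+B)\ge d(A_{R})+c$ for fixed $c>0$ would suffice; your diagnosis of why planar pairs (blocked by the main theorem of this paper for $1<q\le 2$) and perpendicular two-point sets (which give only the Pythagorean value) fail is also correct. But you then explicitly stop, writing that producing the pair ``is precisely the Fradelizi et al.\ construction and is where the work lies.'' That is the gap: the entire content of the ``only if'' direction \emph{is} the explicit example, and without it you have only restated what must be done. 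The heuristic description you give (a deepest point $a^{*}\in\partial\conv(A)$ not in the convex hull of its nearest points in $A$, with $B$ opening up the blocked direction) is reasonable intuition but is not a construction, and verifying that any specific candidate actually satisfies $d(A_{R}+B)\ge R+c$ is a nontrivial computation that you have not attempted.
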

We emphasize that although the conjecture was proved false, it is still an open problem to determine if the conjecture is true when $A=B$. The counter example was constructed by adding together $2$-dimensional sets in $\mathbb{R}^3$ whose sum is a three dimensional set, so such an example could not be constructed in $\mathbb{R}^2$. In this paper we will prove that the conjecture is true in $\mathbb{R}^2$.

\begin{theorem}\label{dyn_farkhi_2_dimension}
    Let $A$ and $B$ be compact sets in $\mathbb{R}^2$. Then
    \begin{equation*}
        d^2(A+B)\leq d^2(A)+d^2(B).
    \end{equation*}
    This is the best upper bound for $d(A+B)$ in terms of $d(A)$ and $d(B)$ in the following sense: If $d_A$ and $d_B$ are non-negative real numbers, then there exist compact sets $A$ and $B$ in $\mathbb{R}^2$ such that $d(A)=d_A$, $d(B)=d_B$, and $d^2(A+B)=d_A^2+d_B^2$.
\end{theorem}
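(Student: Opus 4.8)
I would prove the two assertions separately; the extremal one is immediate. Given $d_A,d_B\ge 0$, take $A=\{(-d_A,0),(d_A,0)\}$ and $B=\{(0,-d_B),(0,d_B)\}$. Then $\conv(A)=[-d_A,d_A]\times\{0\}$, so $d(A)=\sup_{|t|\le d_A}(d_A-|t|)=d_A$, and likewise $d(B)=d_B$; moreover $A+B$ is the four-corner set of the rectangle $R=[-d_A,d_A]\times[-d_B,d_B]$, $\conv(A+B)=R$, and for $(s,t)\in R$ the distance to the nearest corner equals $\sqrt{(d_A-|s|)^2+(d_B-|t|)^2}$, which is maximised at the centre. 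Hence $d(A+B)^2=d_A^2+d_B^2=d(A)^2+d(B)^2$ (the same formula covers $d_A=0$ or $d_B=0$).

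For the inequality, the plan is to pass to an equivalent ``dual'' statement. Since $\conv(A+B)=\conv(A)+\conv(B)$ is compact, the supremum in $d(A+B)$ is attained at some $c^{*}=\bar a+\bar b$ with $\bar a\in\conv(A)$ and $\bar b\in\conv(B)$; writing $\operatorname{dist}(X,Y):=\inf\{|x-y|:x\in X,\ y\in Y\}$ and $C:=c^{*}-B$, we get $\bar a\in\conv(A)\cap\conv(C)$ and $\operatorname{dist}(A,C)=\inf_{x\in A,\,y\in B}|x+y-c^{*}|=\operatorname{dist}(c^{*},A+B)=d(A+B)$. Because $d$ is isometry invariant it is therefore enough --- and, applying the theorem to $A$ and $-C$, in fact equivalent --- to prove: \emph{if $A,C\subseteq\mathbb{R}^{2}$ are compact with $\conv(A)\cap\conv(C)\neq\emptyset$, then $\operatorname{dist}(A,C)^{2}\le d(A)^{2}+d(C)^{2}$.} This reverse inequality, unwound, is exactly the description of $\conv(A)+\conv(B)$ promised in the abstract, and it really does require something new: one cannot merely quote Cassels' Theorem~\ref{cassels_theorem}, since $v^{2}(A)$ can strictly exceed $d^{2}(A)$ even for full-dimensional $A$ (e.g.\ for the vertices of a very obtuse triangle, where representing the midpoint of the long edge forces a large-variance decomposition).

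To prove the reverse inequality I would argue as follows. If $A\cap\conv(C)\neq\emptyset$ or $C\cap\conv(A)\neq\emptyset$ then $\operatorname{dist}(A,C)\le\max(d(A),d(C))$ and we are done, so assume neither holds; a separate elementary argument disposes of the cases where $\conv(A)$ or $\conv(C)$ is lower-dimensional, and a routine translation-and-limit step reduces us to $K:=\conv(A)\cap\conv(C)$ being full-dimensional, after which inner approximation of $A,C$ by finite $\varepsilon$-nets (which raises $d(\cdot)$ by at most $\varepsilon$ and keeps $K$ full-dimensional for small $\varepsilon$) lets us take $A,C$ finite; then $K$ is a convex polygon each of whose vertices lies on an edge of $\conv(A)$ and on an edge of $\conv(C)$. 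For any $p\in K$ choose nearest points $a(p)\in A$, $c(p)\in C$; since $|a(p)-p|\le d(A)$ and $|c(p)-p|\le d(C)$, one has $|a(p)-c(p)|^{2}\le d(A)^{2}+d(C)^{2}$ --- the claim --- as soon as we can find $p\in K$ with $\langle a(p)-p,\ c(p)-p\rangle\ge 0$, i.e.\ at which the ``pull toward $A$'' and the ``pull toward $C$'' are not strictly opposed. To locate such a $p$ I would use first variation: at a minimiser $p$ of $q\mapsto\operatorname{dist}(q,A)$ over $K$, the vector $a(p)-p$ must lie in the outer normal cone of $K$ at $p$, which together with $a(p)\in\conv(A)$ is impossible unless $p$ lies on an arc of $\partial K$ inherited from $\partial\conv(C)$ (or at a vertex of $K$); symmetrically for $\operatorname{dist}(\cdot,C)$; one then plays these two distinguished points of $K$ off against one another along $\partial K$.

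The step I expect to be the genuine obstacle is this last one, because purely local information at a single vertex of $K$ does \emph{not} suffice --- two roughly perpendicular edges of $K$ carrying obtuse pull vectors are locally consistent --- so one needs a global argument on the polygon $K$: for instance an intermediate-value/degree argument tracking the angle between $a(\cdot)-(\cdot)$ and $c(\cdot)-(\cdot)$ as one traverses $\partial K$, or a reduction of the extremal configuration to two segments crossing at interior points each of which bisects them. In that extremal configuration $a_{1},c_{1},a_{2},c_{2}$ form a parallelogram whose diagonals have lengths $2d(A)$ and $2d(C)$, so by the parallelogram law its side lengths $s_{1},s_{2}$ satisfy $s_{1}^{2}+s_{2}^{2}=2\big(d(A)^{2}+d(C)^{2}\big)$, whence $\operatorname{dist}(A,C)^{2}=\min(s_{1}^{2},s_{2}^{2})\le\tfrac12\big(s_{1}^{2}+s_{2}^{2}\big)=d(A)^{2}+d(C)^{2}$, with equality precisely when the diagonals are perpendicular --- which is the extremal example above. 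Once the reverse inequality is in hand, $d(A+B)=\operatorname{dist}(A,C)\le\sqrt{d(A)^{2}+d(C)^{2}}=\sqrt{d(A)^{2}+d(B)^{2}}$ finishes the proof.
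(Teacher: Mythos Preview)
Your reformulation is correct and elegant: writing $C=c^{*}-B$ does turn the statement into ``$\conv(A)\cap\conv(C)\neq\emptyset$ implies $\operatorname{dist}(A,C)^{2}\le d(A)^{2}+d(C)^{2}$'', and the extremal example and the parallelogram-law computation at the end are fine. But this is a genuinely different route from the paper, and the proposal has a real gap exactly where you flag it.

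The paper never passes to the dual picture. It works directly in $\conv(A+B)$: for $x\notin(A+\conv(B))\cup(\conv(A)+B)$ it \emph{constructs} $a_{1},a_{2}\in A$ and $b_{1},b_{2}\in B$ with $|a_{1}-a_{2}|\le 2d(A)$, $|b_{1}-b_{2}|\le 2d(B)$ and $x\in[a_{1},a_{2}]+[b_{1},b_{2}]$ (Lemma~\ref{sumset_convexhull_two_dimension}), and then shows by a direct calculation (Lemmas~\ref{hausdorff_distance_acute_triangle}--\ref{hausdorff_distance_parallelogram}) that among parallelograms of prescribed side lengths the rectangle maximises $d(\textup{vert}(P))$. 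The construction is concrete: one first finds a chord $[\gamma_{1},\gamma_{2}]$ of $\partial\conv(A)$ through the $A$-component of $x$, then slices $\conv(B)+\gamma_{1}$ by the line through $x$ parallel to that chord and picks the closest pair $b_{1},b_{2}\in B$ on opposite sides of the slice; a short ball argument gives $|b_{1}-b_{2}|\le 2d(B)$. No variational or topological step is needed.

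By contrast, your plan rests on finding $p\in K=\conv(A)\cap\conv(C)$ with $\langle a(p)-p,\,c(p)-p\rangle\ge 0$, and this is not established. The suggested ``track the angle around $\partial K$'' argument is obstructed by the discontinuity of the nearest-point maps $p\mapsto a(p)$ and $p\mapsto c(p)$: when $p$ crosses a perpendicular bisector of two points of $A$, the vector $a(p)-p$ jumps, so there is no continuous angle to which the intermediate value theorem applies, and a degree argument would need a careful set-valued selection. The alternative ``reduce to the extremal configuration of two crossing segments, each bisected at the crossing'' is asserted but not argued; nothing in the proposal explains why the worst case must be two-point sets, nor why the crossing point must be the common midpoint. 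Finally, the one-dimensional case you wave off as ``a separate elementary argument'' is not quite free either --- compare Lemma~\ref{one_set_one_dimensional} in the paper, which does some honest trigonometry. So as written the proposal is a plausible strategy with the decisive step missing, whereas the paper's parallelogram construction closes the argument without any global/topological input.
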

As observed in the above theorem, the bound we prove does not just resolve the Dyn-Farkhi conjecture in $\mathbb{R}^2$, but it provides the best upper bound for $d(A+B)$ in terms of $d(A)$ and $d(B)$. Such an optimal upper bound has not been found when $n\geq3$. For the case $n=1$ the optimal bound was found in \cite{meyer1}. If $A_1,\dots, A_m$ are compact in $\mathbb{R}$ such that $d(A_1)\geq \dots \geq d(A_m)$, the optimal bound in terms of $d(A_1),\dots, d(A_m)$ is
\begin{equation*}
    d\left(\sum_{i=1}^{m}A_i\right)\leq\max_{1\leq j\leq m}\left(d(A_j)-\sum_{i=j+1}^{m}d(A_i)\right).
\end{equation*}
An important part of the proof of Theorem \ref{dyn_farkhi_2_dimension} is a certain explicit characterization (observed in Lemma \ref{sumset_convexhull_two_dimension}) for $\textup{conv}(A+B)$ when $A$ and $B$ are compact sets in $\mathbb{R}^2$ with $\textup{dim}(A)=\textup{dim}(B)=2$. The lemma shows that the convex hull of a sumset has a very nice representation (which depends on $d(A)$ and $d(B)$) in $\mathbb{R}^2$. We are not aware if such a representation has been observed before.

A number of the papers already mentioned study functions that are in some sense measures of non-convexity. The survey \cite{fradelizi1} covers many of them. Another measure not mentioned above is the Schneider non-convexity index, which can be referenced in \cite{schneider1}.

The outline of the paper is as follows. In Section \ref{notation_examples} we will provide some basic notation and definitions. In Section \ref{proof} we will construct the proof of Theorem \ref{dyn_farkhi_2_dimension}. We conclude with some natural questions for further research in Section \ref{open_questions}.

\subsection{Acknowledgements} I thank Rober Fraser and Buma Fridman for helpful discussions. I also thank Robert Fraser for carefully reading this manuscript and suggesting corrections to improve its quality. 

\subsection{Funding} The author of this paper was supported in part by the National Science Foundation LEAPS - Division of Mathematical Sciences Grant No. 2316659.

\section{Notation and examples}\label{notation_examples}
The following notation will be used throughout the proof.
\begin{enumerate}
    \item \textbf{Dimension.} Let $A$ be a compact set in $\mathbb{R}^n$. The \textup{affine hull} of $A$, denoted $\textup{aff}(A)$, is the smallest (vector space dimension) translate of a subspace that contains $A$. The dimension of the set $A$, denoted by $\textup{dim}(A)$, is the dimension of $\textup{aff}(A)$. 
    \item \textbf{Boundary.} By a \textit{convex body} we mean a set $K\subseteq \mathbb{R}^n$ which is compact, convex, and has nonempty interior. The \textit{boundary} of a convex body $K$, denoted by $\partial K$, is defined to be the collection of all $x\in K$ such that every neighborhood of $x$ intersects both the interior of $K$ and the complement of $K$.
    \item \textbf{Triangles.} If $v_1,v_2,v_3$ are distinct points of $\mathbb{R}^2$, then we will denote the triangle with vertex set $V:=\{v_1,v_2,v_3\}$ by $\textup{conv}(V)$. To denote the side of the triangle $\textup{conv}(V)$ which has end points $v_i$ and $v_j$ we use the notation $v_iv_j$. The \textit{perpendicular bisector} of the side $v_iv_j$ is the unique line which passes through the midpoint $m_{ij}:=\frac{1}{2}(v_i+v_j)$ and is perpendicular to the side $v_iv_j$. To denote the perpendicular bisector, we will either use the notation $B_{ij}$ or $B(v_iv_j)$, whichever is more convenient. By the angle at $v_j$, we just mean the angle made by the sides $v_jv_k$ and $v_jv_i$ which meet at $v_j$. The same conventions will be used for parallelograms.
    \item \textbf{Hyperplanes.} In $\mathbb{R}^2$ a \textit{hyperplane} is the same as a line (or the translate of an $n-1$ dimensional subspace in $\mathbb{R}^n$). If $K\subset\mathbb{R}^2$ is a convex body, then a supporting hyperplane of $K$ is a line $H$ such that $H\cap K\neq\varnothing$, and $K$ is contained entirely in one of the two closed half spaces bounded by $H$. Every point in $\partial K$ has a supporting hyperplane. Note that in two dimensions the set $H\cap K$ is either a point or an interval. An \textit{extreme point} of $K$ is a point $x\in K$ for which there do not exist distinct $a,b\in K$ and $\lambda\in(0,1)$ such that $x= \lambda a+(1-\lambda)b$. The set of extreme points of $K$ is denoted by $\textup{extreme}(K)$. By the Krein-Milman theorem if $B$ is compact in $\mathbb{R}^n$, then $\textup{extreme}(\textup{conv}(B))\subseteq B$. It follows that if $H$ is any supporting hyperplane of $\textup{conv}(B)$, then $H\cap B\neq\varnothing$ (since $H$ contains extreme points of $\textup{conv}(B)$).
    \item \textbf{Distance.} To denote distances, we will use the notation $d(x,y):=|x-y|$, and for a set $A$ we will use the notation $d(x,A):=\inf_{a\in A}d(x,a)$. With this new notation the definition of Hausdorff distance to convex hull can be written as
    \begin{equation*}
        d(A)=\sup_{x\in\textup{conv}(A)}d(x,A).
    \end{equation*}
\end{enumerate}

In addition if we have a function of $\gamma$, then we will use the notation $D_{\gamma}$ to denote the derivative with respect to $\gamma$. If the real numbers $a$ and $b$ have the same sign, then we will say that $a\approx b$.

The next example shows that the inequality in Theorem \ref{dyn_farkhi_2_dimension} is optimal.

\begin{example}
    Let $d_A,d_B\geq0$. Set $A=\{0,2d_A\}\times\{0\}$ and $B=\{0\}\times\{0,2d_B\}$. Then $d(A)=d_A$ and $d(B)=d_B$. The sumset is $A+B=\{0,2d_A\}\times\{0,2d_B\}$. The Hausdorff distance to convex hull is achieved in the center of the rectangle $A+B$. This is $d(A+B)=\sqrt{d_A^2+d_B^2}$, which verifies the equality part of Theorem \ref{dyn_farkhi_2_dimension}.
\end{example}

Recall that from the introduction that under certain conditions we can have $v(A)=d(A)$, and also the relation $d(A)\leq \textup{rad}(A)$. The next example will show that neither of these is always equality, so it is not possible to use Theorem \ref{cassels_theorem} or \ref{starr_theorem} to prove subadditivity of $d^2$.

\begin{example}
    Let $A=\{(-2,0),(2,0),(0,1)\}$ be the vertex set of an obtuse triangle. We first compute a lower bound on $v(A)$. Set $x=(0,0)\in\textup{conv}(A)$. Then we can write $x=\frac{1}{2}(-2,0)+\frac{1}{2}(2,0)$, and this representation of $x$ is unique. Therefore, 
    \begin{equation*}
        v^2(A)\geq \frac{1}{2}(2)^2+\frac{1}{2}(2)^2=4.
    \end{equation*}
    The lower bound is $v(A)\geq 2$. To compute $d(A)$ we rely on Lemma \ref{hausdorff_distance_obtuse_triangle}. Let $\theta$ be the angle at $(2,0)$. The smaller side lengths are both $\sqrt{5}$. Then the Hausdorff distance to convex hull is
    \begin{equation*}
        d(A)=\frac{\sqrt{5}}{2\cos(\theta)}=\frac{5}{4}.
    \end{equation*}
    We conclude that $v(A)>d(A)$. We can also see that $\textup{rad}(A)$ is the radius of a disk which contains $\textup{conv}(A)$, but a circle with radius $d(A)$ cannot contain $\textup{conv}(A)$. Then $\textup{rad}(A)>d(A)$.
\end{example}

\section{Proof of Theorem \ref{dyn_farkhi_2_dimension}}\label{proof}

The proof of Theorem \ref{dyn_farkhi_2_dimension} is structured as follows.
\begin{enumerate}
    \item Over all the parallelograms with fixed side lengths $a$ and $x$, the largest possible value of $d(\textup{vert}(P))$ is obtained when $P$ is a rectangle. This is done in Lemmas \ref{hausdorff_distance_acute_triangle}, \ref{hausdorff_distance_obtuse_triangle}, \ref{hausdorff_distance_parallelogram}.
    \item If $x$ belongs to $\textup{conv}(A)+B$, then $d(x,A+B)\leq d(A)$. This is Lemma \ref{one_set_convex_lemma}.
    \item Suppose that $A$ and $B$ are compact in $\mathbb{R}^n$. We show that if $\textup{dim}(A)=1$, then Theorem \ref{dyn_farkhi_2_dimension} is true (note that the dimension here can be larger than $2$). This is Lemma \ref{one_set_one_dimensional}.
    \item We give a characterization of the sumset $\textup{conv}(A+B)$ of the convex hull of full dimensional compact sets in $\mathbb{R}^2$. It turns out that in this case an element of $\textup{conv}(A+B)$ either belongs to $\textup{conv}(A)$ translated by an element of $B$, or $\textup{conv}(B)$ translated by an element of $A$, or a parallelogram with vertex set in $A+B$ with side lengths at most $2d(A)$ and $2d(B)$ respectively. Using the earlier result about parallelograms finishes the proof. We establish this in Lemma \ref{sumset_convexhull_two_dimension}.
\end{enumerate}

\begin{lemma}\label{hausdorff_distance_acute_triangle}
    Let $T$ be an acute triangle with angle-side opposite pairs $(\alpha,a)$, $(\beta,b)$, $(\gamma,c)$. Then
    \begin{equation*}\\
        \begin{split}
             d(\textup{vert}(T))&=\frac{c}{2\sin(\gamma)}\\
             &=\frac{a}{2\sin(\alpha)}\\
             &=\frac{b}{2\sin(\beta)}.
        \end{split}
    \end{equation*}
\end{lemma}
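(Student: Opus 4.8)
The plan is to identify the point of $\conv(T)$ that is farthest (in the sense of $d(x,\textup{vert}(T))$) from the vertex set, and to show it is the circumcenter. First I would observe that $d(x,\textup{vert}(T)) = \min\{|x-v_1|,|x-v_2|,|x-v_3|\}$, so we are maximizing the minimum of the three distances to the vertices over $x \in \conv(T)$. The three perpendicular bisectors $B_{12},B_{13},B_{23}$ partition the plane into three regions, in each of which one particular vertex is the nearest; call these regions $R_1,R_2,R_3$ (the closed region $R_i$ consisting of points at least as close to $v_i$ as to the other two vertices). On $R_i \cap \conv(T)$ the function to maximize is simply $|x-v_i|$, which is convex, so its maximum over the (convex) set $R_i \cap \conv(T)$ is attained at an extreme point of that set. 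Because $T$ is acute, the circumcenter $O$ lies in the interior of $\conv(T)$, and $O$ is the unique common point of the three bisectors; it is a vertex of each of the three pieces $R_i \cap \conv(T)$.

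Next I would argue that $O$ is in fact the maximizer. The extreme points of $R_i\cap\conv(T)$ are: the circumcenter $O$, the two triangle vertices adjacent to $v_i$ lying in $R_i$ — wait, those are the other vertices, so more carefully the extreme points are $O$, the midpoints $m_{ij}$ where a bisector meets an edge, and the original vertices. Evaluating $|x-v_i|$: at $v_i$ it is $0$; at $m_{ij}$ it is half an edge length, which equals the circumradius times $\cos$ of an angle, hence strictly less than the circumradius $|O-v_i| = $ (circumradius); and at $O$ it equals the circumradius. Since acuteness guarantees each foot $m_{ij}$ actually lies on the relevant edge and the comparison of these finitely many values always favors $O$, the maximum of $d(x,\textup{vert}(T))$ over $\conv(T)$ equals the circumradius $R$. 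Finally, the law of sines gives $R = \dfrac{a}{2\sin\alpha} = \dfrac{b}{2\sin\beta} = \dfrac{c}{2\sin\gamma}$, which is exactly the claimed formula.

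The main obstacle I expect is the bookkeeping in the second paragraph: correctly enumerating the extreme points of each cell $R_i\cap\conv(T)$ and verifying that the only one achieving the value $R$ is the circumcenter, using acuteness to rule out degenerate configurations (e.g. ensuring each perpendicular bisector actually crosses the interior of the opposite edge, so that the relevant midpoints are genuine extreme points and no vertex gets "cut off"). A clean way to sidestep some of this is to note directly that the open disk of radius $R$ about $O$ contains no vertex of $T$ while its boundary circle passes through all three, so $d(O,\textup{vert}(T)) = R$, and then to show $d(x,\textup{vert}(T)) \le R$ for all $x$: any point $x\in\conv(T)$ lies in some $R_i$, and on the segment from $O$ to $x$ the distance to $v_i$ is monotone, so it suffices to check the boundary of $\conv(T)$, where one is at distance $\le R$ from the nearest endpoint of the edge containing $x$. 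I would present whichever of these two routes turns out to need the least case analysis.
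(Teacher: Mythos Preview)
Your proposal is correct and follows essentially the same route as the paper: both arguments partition $T$ by the three perpendicular bisectors, identify the circumcenter $O$ as the point where $d(x,\textup{vert}(T))$ is maximized, and then read off the circumradius. The only cosmetic differences are that you invoke convexity to reduce to extreme points and then cite the law of sines, whereas the paper dissects each cell into two explicit right triangles and rederives the formula $R=a/(2\sin\alpha)$ by hand.
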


\begin{proof}
   Let $v_1,v_2,v_3$ be the vertices of $T$ which correspond to the angles $\alpha,\beta,\gamma$ respectively. Sine $T$ is acute, the bisectors $B_{12}$, $B_{13}$, $B_{23}$ intersect at a point $P\in\textup{int}(T)$. Dissect $T$ into the triangles $\textup{conv}\{v_1,P,v_2\}$, $\textup{conv}\{v_1,P,v_3\}$, $\textup{conv}\{v_2,P,v_3\}$, which are disjoint except possibly at their boundaries. Let $m_{ij}$ denote the midpoint of the side $v_iv_j$. By considering the midpoints $m_{ij}$, each of the above triangles can be dissected into two right triangles. For example, we can write
   \begin{equation*}
       \textup{conv}\{v_1,P,v_2\}=\textup{conv}\{v_1,m_{12},P\}\cup\textup{conv}\{v_2,m_{12},P\}=:T_1\cup T_2.
   \end{equation*}
   If $x\in T_1$, then since $v_1$ is separated from $v_2$ and $v_3$ by the bisectors $B(v_1v_2)$ and $B(v_1v_3)$, we have $d(x,\textup{vert}(T))=d(x,v_1)$. The farthest distance from $v_1$ within the triangle $T_1$ is the opposite end of the hypotenuse (i.e. when $x=P$), so we have $d(x,v_1)\leq d(P,v_1)$. Equality holds if and only if $x=P$. Similarly, if $x\in T_2$, then $d(x,\textup{vert}(T))=d(x,v_2)\leq d(P,v_2)$, where equality holds if and only if $x=P$. But, also $d(P,v_1)=d(P,v_2)=d(P,v_3)$, so we have $d(x,\textup{vert}(T))\leq d(P,v_1)$ for $x\in T_1\cup T_2$. Now repeat the exact same process for the remaining triangles in the above dissection of $T$. Then we have for any $x\in T$, $d(x,\textup{vert}(T))\leq d(P,v_1)$, where equality is possible. Therefore, $d(\textup{vert}(T))=d(P,v_1)$. Now we can compute the formula for $d(P,v_1)$. First recall that the vertex $v_1$ corresponds to the angle $\alpha$. We consider the triangles $A:=\textup{conv}\{v_1,m_{12},P\}$ and $B:=\textup{conv}\{v_1,m_{13},P\}$, which have a common hypotenuse of length $h:=d(P,v_1)$. If triangle $A$ has an angle $\theta$ at vertex $v_1$, then the triangle $B$ has an angle $\alpha-\theta$ at vertex $v_1$. using basic trigonometry we can solve for $h$ in terms of $\theta$ and $\alpha-\theta$:
   \begin{equation*}
       \frac{c}{2\cos(\theta)}=h=\frac{b}{2\cos(\alpha-\theta)}.
   \end{equation*}
   Using the above identity and the identity $\cos(\alpha-\theta)=\cos(\alpha)\cos(\theta)+\sin(\alpha)\sin(\theta)$ we can solve for $\sin(\theta)$:
   \begin{equation*}
       \sin(\theta)=\frac{(b-c\cos(\alpha))}{c\sin(\alpha)}\cos(\theta).
   \end{equation*}
   Now using the Pythagorean identity we have
   \begin{equation*}
       \cos(\theta)=\frac{c\sin(\alpha)}{\sqrt{b^2+c^2-2bc\cos(\alpha)}}.
   \end{equation*}
   Substitute the above expression for $\cos(\theta)$ into the formula for $h$ to get
   \begin{equation*}
       h=\frac{\sqrt{b^2+c^2-2bc\cos(\alpha)}}{2\sin(\alpha)}=\frac{a}{2\sin(\alpha)}.
   \end{equation*}
   This verifies one of the formulae. By symmetry the other two formulae must also be true, so the proof is complete.
\end{proof}

\begin{lemma}\label{hausdorff_distance_obtuse_triangle}
    Let $T$ be an obtuse triangle with angle-side opposite pairs $(\alpha,a)$, $(\beta,b)$, $(\gamma,c)$. Assume that the side lengths satisfy $a\leq b<c$. Then
    \begin{equation*}
        d(\textup{vert}(T))=\frac{b}{2\cos(\alpha)}.
    \end{equation*}
\end{lemma}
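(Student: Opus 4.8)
The plan is to reprise the Voronoi-type dissection of Lemma~\ref{hausdorff_distance_acute_triangle}, adjusted to the fact that in an obtuse triangle the three perpendicular bisectors meet \emph{outside} $T$. Write $v_1,v_2,v_3$ for the vertices at the angles $\alpha,\beta,\gamma$. Since $a\le b<c$ the angle $\gamma$ is strictly the largest, so obtuseness forces $\gamma>\pi/2$, hence $\alpha\le\beta<\pi/2$ and $\alpha+\beta<\pi/2$; equivalently, by the law of cosines, $b<c\cos\alpha$ and $a<c\cos\beta$. Let $m_{13},m_{23}$ be the midpoints of the sides $v_1v_3,v_2v_3$, let $Q:=B_{13}\cap v_1v_2$, and let $Q':=B_{23}\cap v_1v_2$. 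In the right triangle $\textup{conv}\{v_1,m_{13},Q\}$ (right angle at $m_{13}$, angle $\alpha$ at $v_1$, hypotenuse $v_1Q$) one reads off $d(Q,v_1)=d(Q,v_3)=\tfrac{b}{2\cos\alpha}$, and symmetrically $d(Q',v_2)=d(Q',v_3)=\tfrac{a}{2\cos\beta}$. The inequalities $b<c\cos\alpha$ and $a<c\cos\beta$ then give $d(v_1,Q)=\tfrac{b}{2\cos\alpha}<\tfrac c2$ and $d(v_2,Q')=\tfrac{a}{2\cos\beta}<\tfrac c2$, so on the side $v_1v_2$ the four points occur in the order $v_1,Q,Q',v_2$; in particular $Q,Q'$ are interior to that side and each of $B_{13},B_{23}$ meets $\partial T$ in exactly two points ($Q,m_{13}$ and $Q',m_{23}$ respectively).

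Hence $B_{13}$ cuts off from $T$ the triangle $T_1:=\textup{conv}\{v_1,Q,m_{13}\}$ and $B_{23}$ cuts off $T_2:=\textup{conv}\{v_2,Q',m_{23}\}$; since $Q$ precedes $Q'$ on $v_1v_2$ the cutting chords $Qm_{13}$ and $Q'm_{23}$ are disjoint, so removing the two corners leaves the convex pentagon $P:=\textup{conv}\{Q,m_{13},v_3,m_{23},Q'\}$, and $T=T_1\cup P\cup T_2$ with pairwise disjoint interiors.

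Now I would estimate $d(x,\textup{vert}(T))$ on each piece. For any $x$ and any $j$ we trivially have $d(x,\textup{vert}(T))\le d(x,v_j)$, and $x\mapsto d(x,v_j)$ is convex, so it attains its maximum over a polygon at a vertex. Thus $\sup_{x\in T_1}d(x,\textup{vert}(T))\le\max\{d(v_1,v_1),d(Q,v_1),d(m_{13},v_1)\}=\max\{0,\tfrac{b}{2\cos\alpha},\tfrac b2\}=\tfrac{b}{2\cos\alpha}$; likewise $\sup_{x\in T_2}d(x,\textup{vert}(T))\le\max\{0,\tfrac{a}{2\cos\beta},\tfrac a2\}=\tfrac{a}{2\cos\beta}$; and on $P$, using $d(\,\cdot\,,v_3)$, $\sup_{x\in P}d(x,\textup{vert}(T))\le\max\{0,\tfrac b2,\tfrac a2,\tfrac{b}{2\cos\alpha},\tfrac{a}{2\cos\beta}\}$. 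So $d(\textup{vert}(T))\le\max\{\tfrac{b}{2\cos\alpha},\tfrac{a}{2\cos\beta}\}$. For the matching lower bound, $d(Q,v_2)=c-\tfrac{b}{2\cos\alpha}>\tfrac c2>\tfrac{b}{2\cos\alpha}$, so $d(Q,\textup{vert}(T))=\tfrac{b}{2\cos\alpha}$, whence $d(\textup{vert}(T))=\max\{\tfrac{b}{2\cos\alpha},\tfrac{a}{2\cos\beta}\}$.

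It remains to prove $\tfrac{a}{2\cos\beta}\le\tfrac{b}{2\cos\alpha}$, i.e. $a\cos\alpha\le b\cos\beta$, and I expect this to be the delicate point, since it is exactly where the hypothesis $a\le b$ (not merely the labeling convention $a\le b<c$) must be combined with obtuseness. By the law of sines the claim is equivalent to $\sin\alpha\cos\alpha\le\sin\beta\cos\beta$, i.e. $\sin2\alpha\le\sin2\beta$. Here $2\alpha\le 2\beta$ (from $\alpha\le\beta$), $2\alpha\le\pi-2\beta$ (from $\alpha+\beta\le\pi/2$), and $2\alpha\le\alpha+\beta<\pi/2$; since $\sin$ is increasing on $[0,\pi/2]$ and $\sin2\beta=\sin(\pi-2\beta)$, these give $\sin2\alpha\le\sin2\beta$. (Alternatively, one may argue algebraically: clearing denominators in $\tfrac{a}{2\cos\beta}\le\tfrac{b}{2\cos\alpha}$ and using the projection identity $c=b\cos\alpha+a\cos\beta$ with the law of cosines reduces the inequality to $2bc\cos^2\alpha-(2b^2+c^2)\cos\alpha+bc\ge0$, which factors as $2bc\left(\cos\alpha-\tfrac bc\right)\!\left(\cos\alpha-\tfrac{c}{2b}\right)\ge0$; obtuseness gives $\cos\alpha>\tfrac bc$ and $a\le b$ gives $\cos\alpha\ge\tfrac{c}{2b}$, so both factors are nonnegative.) This completes the proof.
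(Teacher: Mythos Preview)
Your proof is correct and rests on the same geometric insight as the paper's: in an obtuse triangle the perpendicular bisectors of the two shorter sides meet the longest side at two interior points, and the farthest point from the vertex set is one of these. The execution differs in two places. First, the paper dissects $T$ into three triangles through the obtuse vertex and argues, region by region, which vertex of $T$ is closest; you instead chop off the two acute corners $T_1,T_2$ and leave a convex pentagon $P$, then on each piece bound $d(x,\textup{vert}(T))$ by $d(x,v_j)$ for a single well-chosen $v_j$ and use convexity of $d(\cdot,v_j)$ to push the maximum to a vertex of the piece. This avoids the ``which vertex is nearest'' case analysis and is a bit slicker. Second, for the final comparison $\tfrac{a}{2\cos\beta}\le\tfrac{b}{2\cos\alpha}$ the paper clears denominators with the law of cosines and factors $(b^2-a^2)(c^2-a^2-b^2)\ge 0$; your primary argument converts via the law of sines to $\sin 2\alpha\le\sin 2\beta$ and uses $\alpha\le\beta$ together with $\alpha+\beta<\pi/2$, which is a clean trigonometric alternative. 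Your parenthetical algebraic proof is essentially the paper's computation in a different guise: your factorization $2bc(\cos\alpha-\tfrac{b}{c})(\cos\alpha-\tfrac{c}{2b})$ becomes $(c\cos\alpha-b)(2b\cos\alpha-c)$, which after substituting $\cos\alpha=(b^2+c^2-a^2)/(2bc)$ is a positive multiple of $(c^2-a^2-b^2)(b^2-a^2)$.
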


\begin{proof}
    Denote the vertices of $T$ by $v_1,v_2,v_3$, where the side of length $a$ is $v_1v_2$, and the side of length $b$ is $v_2v_3$. For given $i\neq j$, let $B_{ij}$ denote the perpendicular bisector of the side $v_iv_j$. The perpendicular bisectors $B(v_1v_2)$, $B(v_2v_3)$ intersect the side $v_1v_3$ at the points $P_{12}$,$P_{23}$ respectively. Let $m_{ij}$ denote the midpoint of the side $v_iv_j$. We now dissect the triangle $T$ into three triangles which are disjoint except possibly at the boundaries:
    \begin{equation*}
        T=\textup{conv}\{v_1,v_2,P_{12}\}\cup\textup{conv}\{v_2,P_{23},P_{12}\}\cup\textup{conv}\{v_2,v_3,P_{23}\}.
    \end{equation*}
    The triangle $\textup{conv}\{v_1,v_2,P_{12}\}$ is the union of two right triangles. Suppose for example that $x$ belongs to the right triangle with vertex $v_1$. Since $v_1$ is separated from $v_2$ and $v_3$ by the bisectors $B_{12}$ and $B_{13}$ respectively, we have $d(x,\textup{vert}(T))=d(x,v_1)$. Since $v_1$ and $P_{12}$ are the endpoints of the hypotenuse, we have $d(x,v_1)\leq d(P_{12},v_1)$. That is, $d(x,\textup{vert}(T))\leq d(P_{12},v_1)$. If $x$ belongs to the other right triangle with vertex $v_2$, then by similar reasoning we find that $d(x,\textup{vert}(T))\leq d(P_{12},v_2)$. Since $d(P_{12},v_1)=d(P_{12},v_2)$, we have shown that when $x$ is in the triangle $\textup{conv}\{v_1,v_2,P_{12}\}$, we have $d(x,\textup{vert}(T))\leq d(P_{12},v_2)$. Equality holds if and only if $x=P_{12}$. To calculate the distance $d(P_{12},v_2)$, we first observe that the right triangle has angle $\beta$ at vertex $P_{12}$ with adjacent side $a/2$, and we want to compute the hypotenuse. With basic trigonometry we compute 
    \begin{equation*}
        d(P_{12},v_2)=\frac{a}{2\cos(\beta)}.
    \end{equation*}
    Just as was observed above, the triangle $\textup{conv}\{v_2,v_3,P_{23}\}$ is the union of two right triangles, where the hypotenuses meet at the point $P_{23}$, and one of the triangles has angle $\alpha$ at vertex $v_2$. Just as was shown above we find that if $x$ belongs to either right triangle, then $d(x,\textup{vert}(T))\leq d(P_{23},v_2)$. By a similar computation we compute
    \begin{equation*}
        d(P_{23},v_2)=\frac{b}{2\cos(\alpha)}.
    \end{equation*}
    Finally, the triangle $\textup{conv}\{v_2,P_{12},P_{23}\}$ has vertex $v_2$ which is separated from the vertices $v_1$,$v_3$ by the bisectors $B(v_1v_2)$, $B(v_2v_3)$ respectively. So, in the triangle we have $d(x,\textup{vert}(T))=d(x,v_2)$. The farthest distance from the vertex $v_2$ is the endpoint of the longest of the two sides which have $v_2$ as an endpoint. These lengths are $d(P_{12},v_2)$ and $d(P_{23},v_2)$. Therefore, using the above computations we have for such an $x$,
    \begin{equation*}
        d(x,\textup{vert}(T))\leq\max\left\{\frac{a}{2\cos(\beta)},\frac{b}{2\cos(\alpha)}\right\}.
    \end{equation*}
    Equality holds when $x=P_{12}$ or $x=P_{23}$, whichever achieves the maximum. So, we have proved that for $x\in T$,
    \begin{equation*}
        d(\textup{vert}(T))=\max\left\{\frac{a}{2\cos(\beta)},\frac{b}{2\cos(\alpha)}\right\}.
    \end{equation*}
    All that remains is to determine which of the two distances is the largest. Use the law of cosines to get
    \begin{equation*}
        \begin{split}
            \frac{b}{2\cos(\alpha)}-\frac{a}{2\cos(\beta)}&=\frac{b^2c}{b^2+c^2-a^2}-\frac{a^2c}{a^2+c^2-b^2}\\
            &\approx b^2(a^2+c^2-b^2)-a^2(b^2+c^2-a^2)\\
            &=(b^2-a^2)(c^2-b^2-a^2)\geq0.
        \end{split}
    \end{equation*}
    The last step followed from the law of cosines (with angle $\gamma$, which is larger than $90^{\circ}$) and that $b\geq a$. Putting everything together we have proved
    \begin{equation*}
        d(\textup{vert}(T))=\frac{b}{2\cos(\alpha)}.
    \end{equation*}
    This completes the proof.
\end{proof}

\begin{remark}
    Note that in Lemmas \ref{hausdorff_distance_acute_triangle} and \ref{hausdorff_distance_obtuse_triangle} we did not consider the case where $T$ is a right triangle. In either Lemma, if we carefully follow the proof, then it can be seen that for a right triangle we have $d(\textup{vert}(T))=\frac{c}{2}$, where $c$ is the length of the hypotenuse. 
\end{remark}

\begin{lemma}\label{hausdorff_distance_parallelogram}
    Let $0<a\leq x$, and let $P_{\gamma}$ be the parallelogram with side lengths $a$ and $x$, where $\gamma$ is the smaller angle which satisfies $0\leq \cos(\gamma)<1$. Then $d(\textup{vert}(P_{\gamma}))\leq d(\textup{vert}(P_{90^{\circ}}))$.
\end{lemma}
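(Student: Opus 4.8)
The plan is to compute $d(\textup{vert}(P_\gamma))$ explicitly as a function of $\gamma$ and then show it is maximized at $\gamma = 90^\circ$. A parallelogram with side lengths $a \le x$ and angle $\gamma$ has two diagonals; the diagonal opposite the angle $\gamma$ (connecting the two vertices where the angle is $180^\circ - \gamma$) has squared length $a^2 + x^2 + 2ax\cos(\gamma)$, while the other has squared length $a^2 + x^2 - 2ax\cos(\gamma)$. First I would split $P_\gamma$ into two triangles along the longer diagonal and apply Lemmas \ref{hausdorff_distance_acute_triangle} and \ref{hausdorff_distance_obtuse_triangle} (and the right-triangle remark) to each triangle. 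By the symmetry of the parallelogram under point reflection through its center, the two triangles are congruent, so $d(\textup{vert}(P_\gamma))$ equals the common value $d(\textup{vert}(T))$ for either triangle $T$ — but one must be careful: a point $x$ in one triangle is closer to a vertex of $P_\gamma$ that may lie in the \emph{other} triangle, so the naive reduction needs the observation that the nearest vertex of $P_\gamma$ to any point is the nearest vertex of whichever sub-triangle contains it (this follows from the perpendicular-bisector separation argument already used in the triangle lemmas, applied now to all four vertices of $P_\gamma$).

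Next I would carry out the case analysis on the triangle $T$ formed by splitting along the diagonal of length $\ell := \sqrt{a^2 + x^2 - 2ax\cos\gamma}$ (the shorter diagonal, opposite the obtuse-or-right angle $180^\circ - \gamma$ when $\gamma < 90^\circ$; note $\gamma \le 90^\circ$ here since $0 \le \cos\gamma < 1$). This triangle $T$ has sides $a$, $x$, $\ell$ with the angle $\gamma$ between sides $a$ and $x$ — wait, more precisely the two sides of the parallelogram meet at angle $\gamma$ at one vertex and the diagonal closes it off, so $T$ has sides $a, x$ and the included angle $\gamma$; but the diagonal we cut along is the one \emph{not} subtending $\gamma$, so actually $T$ has the angle $180^\circ-\gamma$. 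Let me instead cut along the \emph{longer} diagonal $L := \sqrt{a^2+x^2+2ax\cos\gamma}$, so that each resulting triangle has sides $a, x, L$ with included angle $\gamma \le 90^\circ$ between the sides $a$ and $x$. Then $L$ is the longest side of $T$ iff $\gamma \ge$ some threshold, and the angle opposite $L$ is $\gamma$ which is $\le 90^\circ$, so $T$ is never obtuse at that vertex; $T$ is acute or obtuse depending on the other angles. When $T$ is acute, Lemma \ref{hausdorff_distance_acute_triangle} gives $d(\textup{vert}(T)) = \frac{L}{2\sin\gamma}$. When $T$ is obtuse, the obtuse angle is opposite one of $a$ or $x$; since $x \ge a$, if $T$ is obtuse it is obtuse at the vertex opposite $x$, and Lemma \ref{hausdorff_distance_obtuse_triangle} (with $c = x$, and $\alpha$ the angle opposite the \emph{shorter} of $a, L$) gives the relevant formula. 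In all cases I would express $d(\textup{vert}(P_\gamma))^2$ as a function of $\gamma$ and differentiate (using the $D_\gamma$ notation) to locate the maximum, or — likely cleaner — argue monotonicity on each sub-interval of $\gamma$ directly.

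The main obstacle I anticipate is the case split: as $\gamma$ increases from the value where $\cos\gamma$ is close to $1$ up to $90^\circ$, the triangle $T$ changes type (obtuse $\to$ right $\to$ acute), and the formula for $d(\textup{vert}(T))$ switches between the two lemmas; I must check that the resulting piecewise function of $\gamma$ is continuous at the transition and increasing throughout, so that its supremum on $[\,\gamma : \cos\gamma \ge 0\,]$ is attained at $\gamma = 90^\circ$, where $d(\textup{vert}(P_{90^\circ})) = \frac{1}{2}\sqrt{a^2+x^2}$ (the center of the rectangle, at distance half the diagonal from each corner). A convenient sanity check at the endpoint: at $\gamma = 90^\circ$ we have $L = \sqrt{a^2+x^2}$, $\sin\gamma = 1$, and the acute formula $\frac{L}{2\sin\gamma} = \frac{1}{2}\sqrt{a^2+x^2}$ matches. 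For the monotonicity, in the acute regime $\frac{L^2}{4\sin^2\gamma} = \frac{a^2+x^2+2ax\cos\gamma}{4(1-\cos^2\gamma)}$; substituting $t = \cos\gamma \in [0,1)$ this is $\frac{a^2+x^2+2axt}{4(1-t^2)}$, and one checks its derivative in $t$ has a sign making the expression decreasing in $t$ (hence increasing in $\gamma$) on the relevant range — a short computation reducing to a quadratic inequality in $t$ with coefficients in $a, x$. The obtuse regime is handled analogously with the Lemma \ref{hausdorff_distance_obtuse_triangle} formula, and continuity at the right-triangle transition follows from the remark. This completes the reduction to elementary single-variable calculus.
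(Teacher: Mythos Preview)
Your overall strategy---split the parallelogram along a diagonal, reduce to the triangle lemmas, then differentiate---is exactly the paper's approach. But there is a genuine geometric error in your setup that propagates to a wrong conclusion. When you cut the parallelogram along a diagonal, the included angle between the sides $a$ and $x$ in the resulting triangle is the angle of the parallelogram at the vertex \emph{not} on that diagonal. The shorter diagonal $\ell=\sqrt{a^2+x^2-2ax\cos\gamma}$ connects the two vertices with angle $180^\circ-\gamma$, so cutting along $\ell$ gives a triangle with included angle $\gamma$ and third side $\ell$; cutting along the longer diagonal $L$ gives included angle $180^\circ-\gamma$, which is always obtuse (or right). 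You have this backwards, and your second-guessing mid-paragraph lands on the wrong choice. As a consequence, your ``acute regime'' formula $\tfrac{L}{2\sin\gamma}$ is incorrect---it should be $\tfrac{\ell}{2\sin\gamma}$---and if you carry out the monotonicity check with your formula $\tfrac{a^2+x^2+2axt}{4(1-t^2)}$ (where $t=\cos\gamma$), the derivative in $t$ is proportional to $axt^2+(a^2+x^2)t+ax$, which has roots $-a/x$ and $-x/a$ and is strictly \emph{positive} on $[0,1)$. That makes your expression \emph{increasing} in $t$, hence \emph{decreasing} in $\gamma$, the opposite of what you need. Your endpoint sanity check at $\gamma=90^\circ$ does not catch this because $L=\ell$ there.

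The paper cuts along the shorter diagonal, so the triangle has included angle $\gamma$ and third side $\ell$; it then splits into the case $0<\cos\gamma\le a/x$ (triangle not obtuse, circumradius formula $\ell/(2\sin\gamma)$) and the case $a/x<\cos\gamma<1$ (triangle obtuse with the obtuse angle opposite $x$, formula from Lemma~\ref{hausdorff_distance_obtuse_triangle}), and checks monotonicity on each piece. You also wave past the reduction $d(\textup{vert}(P_\gamma))=d(\textup{vert}(T))$: this requires verifying that the point of $T$ realizing $d(\textup{vert}(T))$ is no closer to the fourth vertex than to the three vertices of $T$, which the paper checks explicitly in each case using the perpendicular bisectors (and which would actually fail in general if you cut along the longer diagonal).
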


\begin{proof}
    Label the vertices by $v_1,v_2,w_1,w_2$ so that $v_1w_1$, $v_2w_2$ are the sides of length $a$, and $v_1v_2$, $w_1w_2$ are the sides of length $x$. We consider the following cases.
    \begin{case}
        $0< \cos(\gamma)\leq \frac{a}{x}$.
    \end{case}
    Define the triangle $T:=\textup{conv}\{v_1,v_2,w_1\}$. We will verify that $T$ is not an obtuse triangle. Let the angle-side-opposite pairs be given by $(\theta,x)$, $(\alpha,a)$, and $(\gamma,h)$. By the law of cosines and the assumption we have
    \begin{equation*}
        h^2=a^2+x^2-2ax\cos(\gamma)\geq a^2+x^2-2a^2=x^2-a^2.
    \end{equation*}
    Again by the law of cosines we have
    \begin{equation*}
        \cos(\theta)=\frac{a^2+h^2-x^2}{2ah}\approx a^2+h^2-x^2\geq a^2+x^2-a^2-x^2=0.
    \end{equation*}
    Then $\cos(\theta)\geq0$, which implies that $\theta\leq 90^{\circ}$. Using law of cosines once more we have
    \begin{equation*}
        \cos(\alpha)=\frac{x^2+h^2-a^2}{2xh}\approx x^2+h^2-a^2\geq x^2+x^2-a^2-a^2=2x^2-2a^2\geq0.
    \end{equation*}
    Then $\cos(\alpha)\geq0$, which implies that $\alpha\leq 90^{\circ}$. Since none of the angles can be larger than $90^{\circ}$, we have verified that the triangle $T$ is not obtuse. Then $d(\textup{vert}(T))$ is achieved at a point $x_H\in T$ which lies at the intersection of all three perpendicular bisectors. In other words, 
    \begin{equation*}
        d(\textup{vert}(T))=d(x_H,v_1)=d(x_H,v_2)=d(x_H,w_1).
    \end{equation*}
    The bisector $B(w_1w_2)$ separates $P_\gamma$ into two regions, and one of those regions must contain the vertex $w_2$. Since $x_H\in B(v_1v_2)$, we reason that $x_H$ belongs to the region that does not contain $w_2$. Therefore $d(x_H,w_1)<d(x_H,w_2)$. Now consider the triangle $T^*:=\textup{conv}\{w_1,w_2,v_2\}$. By the exact same reasoning ($T^*$ is in fact the same triangle as $T$ but reflected across the line through $w_1$ and $v_2$) we find a point $x_H^*\in T^*$ which satisfies 
    \begin{equation*}
        d(\textup{vert}(T*))=d(x_H^*,w_1)=d(x_H^*,w_2)=d(x_H^*,v_2),
    \end{equation*}
    and $d(x_H^*,v_2)<d(x_H^*,v_1)$. Now, to calculate the Hausdorff distance of $\textup{vert}(P_\gamma)$ we first note that $P_\gamma=T\cup T^*$. Let $x\in P_\gamma$. Then $x\in T$ or $x\in T^*$. Without loss of generality, we may assume that $x\in T$. Then 
    \begin{equation*}
        \begin{split}
            \min\{d(x,v_1),d(x,v_2),d(x,w_1),d(x,w_2)\}&\leq \min\{d(x,v_1),d(x,v_2),d(x,w_1)\}\\
            &\leq \sup_{t\in T}\min\{d(t,v_1),d(t,v_2),d(t,w_1)\}\\
            &=d(\textup{vert}(T)).
        \end{split}
    \end{equation*}
    The same inequality occurs if $x\in T^*$. This implies that $d(\textup{vert}(P_\gamma))\leq d(\textup{vert}(T))$. But we have shown that the element $x_H$ satisfies 
    \begin{equation*}
        \min\{d(x_H,v_1),d(x_H,v_2),d(x_H,w_1),d(x_H,w_2)\}=\min\{d(x_H,v_1),d(x_H,v_2),d(x_H,w_1)\}=d(\textup{vert}(T)).
    \end{equation*}
    Therefore, $d(\textup{vert}(P_\gamma))=d(\textup{vert}(T))$. By Lemma \ref{hausdorff_distance_acute_triangle} we have that
    \begin{equation*}
        d(\textup{vert}(T))=\frac{\sqrt{a^2+x^2-2ax\cos(\gamma)}}{2\sin(\gamma)}.
    \end{equation*}
    Now, setting $d_\gamma:=d(\textup{vert}(T))$ we compute
    \begin{equation*}
        \begin{split}
            D_\gamma[(2d_\gamma)^2]&=D_\gamma\left[\frac{a^2+x^2-2ax\cos(\gamma)}{\sin^2(\gamma)}\right]\\
            &\approx ax \sin^2(\gamma)-\cos(\gamma)(a^2+x^2)+2ax\cos^2(\gamma)\\
            &=ax\cos^2(\gamma)-(a^2+x^2)\cos(\gamma)+ax=:P(\cos(\gamma)).
        \end{split}
    \end{equation*}
    The zeros of the polynomial $P(\cos(\gamma))$ are $\frac{a}{x}$ and $\frac{x}{a}$. Since the polynomial $P(\cos(\gamma))$ is (in the variable $\cos(\gamma)$) a parabola opening upwards, and since $\cos(\gamma)\leq \frac{a}{x}<\frac{x}{a}$ by assumption, we must have $P(\cos(\gamma))\geq0$. So, $D_\gamma[(2d_\gamma)^2]\geq0$, from which it follows that $D_\gamma[d_\gamma]\geq0$. This proves the result for the given case.

    \begin{case}
        $\frac{a}{x}<\cos(\gamma)<1$.
    \end{case}
    We continue to use the same notation as in the first case. In this case we will verify that the triangle $T$ (and so also $T^*$) is obtuse. To verify this first note that by using the computations from the first case (but with a flipped inequality sign) we find that $h^2<x^2-a^2$. Then we compute
    \begin{equation*}
        \cos(\theta)\approx a^2+h^2-x^2<a^2+x^2-a^2-x^2=0.
    \end{equation*}
    Then $\cos(\theta)<0$, which implies that $\theta>90^{\circ}$. So, the triangle $T$ must be obtuse where the longest side has length $x$. Then the two possible points where $d(\textup{vert}(T))$ is achieved are given by 
    \begin{equation*}
        \begin{split}
            \{x_1\}&=B(v_1w_1)\cap v_1v_2,\\
            \{x_2\}&=B(w_1v_2)\cap v_1v_2.
        \end{split}
    \end{equation*}
    Since the vertices $v_1$ and $v_2$ are separated from $w_2$ by the bisector $B(v_2w_2)$ we find that $d(\textup{vert}(P_\gamma))=d(\textup{vert}(T))$ by the same reasoning as in the first case. Using Lemma \ref{hausdorff_distance_obtuse_triangle} we find that 
    \begin{equation*}
        d(\textup{vert}(T))=\max\left(\frac{a}{2\cos(\gamma)},\frac{a^2+x^2-2ax\cos(\gamma)}{2x-2a\cos(\gamma)}\right).
    \end{equation*}
    Note that to compute the second component we used the formula $h/(2\cos(\alpha))$ and law of cosines for both $h$ and $\cos(\alpha)$. We compute the derivative of the first component to get 
    \begin{equation*}
        D_\gamma\left[\frac{a}{2\cos(\gamma)}\right]\approx 2a\sin(\gamma)\geq0.
    \end{equation*}
    Then we compute the derivative of the second component to get
    \begin{equation*}
        D_\gamma\left[\frac{a^2+x^2-2ax\cos(\gamma)}{2x-2a\cos(\gamma)}\right]\approx x^2-a^2\geq0.
    \end{equation*}
    Since both components are increasing functions, the maximum of the two components is an increasing function, so we have proved the claim.
\end{proof}

\begin{lemma}\label{one_set_convex_lemma}
    Let $A$ and $B$ be compact sets in $\mathbb{R}^n$. Suppose that $x\in \textup{conv}(A)+B$. Then
    \begin{equation*}
        d(x,A+B)\leq d(A).
    \end{equation*}
\end{lemma}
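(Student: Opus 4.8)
The plan is to prove this directly from the definition of $d(A)$ together with the additive structure of $\textup{conv}(A)+B$; no machinery beyond compactness is needed.

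First I would use the hypothesis to write $x = a' + b$ with $a' \in \textup{conv}(A)$ and $b \in B$. Since $a'$ lies in $\textup{conv}(A)$, the defining supremum $d(A) = \sup_{y \in \textup{conv}(A)} d(y,A)$ immediately gives $d(a', A) \le d(A)$. Because $A$ is compact, the infimum in $d(a',A) = \inf_{a \in A}|a'-a|$ is attained, so I can fix a point $a \in A$ with $|a' - a| = d(a',A) \le d(A)$.

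Next, note that $a + b \in A + B$, and compute $|x - (a+b)| = |(a'+b) - (a+b)| = |a' - a| \le d(A)$. Taking the infimum over all points of $A+B$ then yields $d(x, A+B) \le |x - (a+b)| \le d(A)$, which is exactly the assertion of the lemma.

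I do not expect any real obstacle here; the argument is a one-line chain of inequalities once $x$ is decomposed. The only point worth keeping in mind is that the specific point $a+b$ produced above need not be the nearest point of $A+B$ to $x$ — it is used purely to furnish an upper bound for $d(x,A+B)$, which is all the lemma requires.
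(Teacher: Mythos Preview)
Your argument is correct and is essentially identical to the paper's own proof: both decompose $x=a'+b$ and bound $d(x,A+B)$ by $d(a',A)\le d(A)$ via the single witness $a+b\in A+B$. The only cosmetic difference is that you invoke compactness to realize the minimizer $a$, whereas the paper leaves the infimum unrealized; this extra step is harmless but not needed for the inequality.
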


\begin{proof}
    If $x\in\textup{conv}(A)+B$, then we can write $x=a+b$, where $a\in\textup{conv}(A)$ and $b\in B$. Now, observe that
    \begin{equation*}
        \begin{split}
            d(a+b,A+B)&\leq d(a+b,A+b)\\
            &=\inf_{a^{\prime}+b\in A+b}|(a+b)-(a^{\prime}+b)|\\
            &=\inf_{a^{\prime}\in A}|a-a^{\prime}|\\
            &=d(a,A)\\
            &\leq d(A).
        \end{split}
    \end{equation*}
    So, we have $d(x,A+B)\leq d(A)$ for any $x\in \textup{conv}(A)+B$, which completes the proof.
\end{proof}

\begin{remark}
    By Lemma \ref{one_set_convex_lemma} we have the following natural result: If $A$ and $B$ are compact sets in $\mathbb{R}^n$ such that $B$ is convex, then $d(A+B)\leq d(A)$. 
\end{remark}

\begin{lemma}\label{one_set_one_dimensional}
    For an integer $n\geq2$, let $A$ and $B$ be compact sets in $\mathbb{R}^n$ such that $\textup{dim}(A)=1$ and $\textup{dim}(B)\in\{1,\dots,n\}$. Then
    \begin{equation*}
        d^2(A+B)\leq d^2(A)+d^2(B).
    \end{equation*}
\end{lemma}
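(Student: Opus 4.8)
The plan is to exploit the one-dimensionality of $A$ by decomposing $\mathbb{R}^n$ into the line containing $A$ and its orthogonal complement. Applying a rotation to both sets and then translating $A$ changes neither $d(A)$, $d(B)$, nor $d(A+B)$, so we may assume $\textup{aff}(A)=\mathbb{R}e_1=:L$. Write each $w\in\mathbb{R}^n$ as $w=(w_1,\hat w)$, where $w_1\in\mathbb{R}$ is the $e_1$-coordinate and $\hat w$ is the $L^{\perp}$-component, and write $\conv(A)=[\alpha_0,\alpha_1]\times\{0\}$. Since every point of $A$ has the form $(a_1,0)$, the Pythagorean theorem gives $d(w,A)^2=d_{\mathbb{R}}(w_1,A)^2+|\hat w|^2$, where $d_{\mathbb{R}}(\cdot,A)$ denotes one-dimensional distance to the set of first coordinates of $A$. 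Two elementary facts will be used: $d_{\mathbb{R}}(t,A)\le d(A)$ for $t\in[\alpha_0,\alpha_1]$ (this is just the definition of $d(A)$ restricted to the segment $\conv(A)$), while $d_{\mathbb{R}}(t,A)=t-\alpha_1$ for $t>\alpha_1$ and $d_{\mathbb{R}}(t,A)=\alpha_0-t$ for $t<\alpha_0$, because $\alpha_0,\alpha_1\in A$.

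Next, fix $z\in\conv(A+B)=\conv(A)+\conv(B)$ and write $z=a+c$ with $a=(a_1,0)\in\conv(A)$ and $c\in\conv(B)$. Since $c\in\conv(B)$, we may choose $b^{*}\in B$ with $|c-b^{*}|=d(c,B)=:\rho\le d(B)$; write $c-b^{*}=(\sigma,\hat u)$, so $\sigma^2+|\hat u|^2=\rho^2$. Put $\tau:=a_1+\sigma=z_1-b_1^{*}$ and choose $a^{*}=(a_1^{*},0)\in A$ with $|a_1^{*}-\tau|=d_{\mathbb{R}}(\tau,A)$. The error vector $z-(a^{*}+b^{*})$ has $e_1$-component $\tau-a_1^{*}$ and $L^{\perp}$-component $\hat u$, so $|z-(a^{*}+b^{*})|^2=d_{\mathbb{R}}(\tau,A)^2+|\hat u|^2$. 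If $\tau\in[\alpha_0,\alpha_1]$, then $d_{\mathbb{R}}(\tau,A)\le d(A)$ and $|\hat u|^2\le\rho^2\le d(B)^2$, so this is at most $d(A)^2+d(B)^2$. If $\tau>\alpha_1$, then $a_1\le\alpha_1$ forces $0<\tau-\alpha_1=\sigma-(\alpha_1-a_1)\le\sigma$, hence $d_{\mathbb{R}}(\tau,A)^2\le\sigma^2$ and $|z-(a^{*}+b^{*})|^2\le\sigma^2+|\hat u|^2=\rho^2\le d(B)^2\le d(A)^2+d(B)^2$; the case $\tau<\alpha_0$ is symmetric. In all cases $d(z,A+B)\le\sqrt{d(A)^2+d(B)^2}$, and taking the supremum over $z\in\conv(A+B)$ gives $d^2(A+B)\le d^2(A)+d^2(B)$.

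I expect the only delicate point — and where a naive argument based on the triangle inequality fails — to be the choice of which point of $A$ to subtract: one must project the \emph{shifted} coordinate $\tau=a_1+\sigma$ (equivalently $z_1-b_1^{*}$) onto $A$, not $a_1$ itself. When that projection lands outside $\conv(A)$, the overshoot $|\tau-a_1^{*}|$ is no larger than $|\sigma|$, while the perpendicular error is exactly $|\hat u|^2=\rho^2-\sigma^2$; the two contributions then telescope to $\rho^2\le d(B)^2$, and it is precisely this cancellation — rather than an additive estimate — that yields the Pythagorean bound $d(A)^2+d(B)^2$ instead of the useless $(d(A)+d(B))^2$. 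It is worth noting that the argument uses neither $\textup{dim}(B)$ nor the value of $n$, so it proves the lemma in the stated generality; the single structural hypothesis that matters is $\textup{dim}(A)=1$.
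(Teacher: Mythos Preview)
Your proof is correct and takes a genuinely different route from the paper. The paper fixes an interval $I_a=[0,x]\subseteq\conv(A)$ with endpoints in $A$ containing $a$, chooses the nearest $\gamma\in B$ to $b$, and then builds two auxiliary triangles with vertices $b,\gamma,b+d_a(\theta)u$ and $b+d_a(\theta)u,\gamma+x,b+x$, where $\theta$ is the angle between $[b,\gamma]$ and the direction $u$ of $I_a$; the bound is obtained by explicit law-of-cosines computations on these triangles, together with a separate treatment of the degenerate parallel case $\theta=0$. Your argument instead orthogonally decomposes $\mathbb{R}^n=L\oplus L^\perp$, projects the \emph{shifted} first coordinate $\tau=z_1-b_1^{*}$ onto $A$, and lets the Pythagorean theorem do the work: either $\tau\in[\alpha_0,\alpha_1]$ and the two error components are bounded separately by $d(A)$ and $d(B)$, or $\tau$ overshoots and the overshoot is controlled by $|\sigma|$, whence $|\tau-a_1^{*}|^2+|\hat u|^2\le\sigma^2+|\hat u|^2=\rho^2\le d(B)^2$. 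This avoids all trigonometry and the parallel/non-parallel case split, and makes transparent why the bound is $d(A)^2+d(B)^2$ rather than $(d(A)+d(B))^2$. The paper's approach, on the other hand, stays closer to the parallelogram viewpoint that drives the full-dimensional case in Lemma~\ref{sumset_convexhull_two_dimension}.
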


\begin{proof}
    Let $a\in\textup{conv}(A)$ and let $b\in \textup{conv}(B)$. Since $\textup{dim}(A)=1$ there exists an interval $I_a\subseteq \textup{conv}(A)$ such that $\textup{vert}(I_a)\subseteq A$, $a\in I_a$, and $d(\textup{vert}(I_a))\leq d(A)$. Without loss of generality we assume that $I_a=[0,x]$ (if not, just translate $A$ appropriately to achieve this representation). By compactness of $B$ there exists $\gamma\in B$ such that $d(b,B)=d(b,\gamma)=:d_\gamma$. By definition of Hausdorff distance, $d_{\gamma}\leq d(B)$. Also, $a+b\in [b,b+x]$.
    We will now measure the distance from $a+b$ to $\gamma$. We begin by constructing two triangles. The first triangle, call it $T_1$, has vertices $b$, $\gamma$, $b+d_a(\theta)u$, where $u$ is a unit vector pointing in the direction of $[b,b+x]$, and
    \begin{equation*}
        d_a(\theta):=d_\gamma\cos(\theta)+\sqrt{d^2(A)+d^2(B)-d_\gamma^2\sin^2(\theta)}, 
    \end{equation*}
    and $\theta$ is the angle at the vertex $b$. Also, we will be assuming that $\theta\in(0,\frac{\pi}{2}]$. In the case that $\theta$ is obtuse we can use a similar idea, and we will deal with the case $\theta=0$ later. The second triangle, call it $T_2$, has vertices $b+d_a(\theta)u$, $\gamma+x$, and $b+x$. Now, either $a+b\in [b,b+d_a(\theta)u]$ or $a+b\in [b+d_a(\theta)u,b+x]$. In the first case we have
    \begin{equation*}
        \begin{split}
            d^2(a+b,\gamma)&\leq \max(d_\gamma^2,d^2(b+d_a(\theta)u,\gamma))\\
            &=\max(d_\gamma^2,d_a(\theta)^2+d_\gamma^2-2d_a(\theta)d_\gamma\cos(\theta))\\
            &\leq d^2(A)+d^2(B).
        \end{split}
    \end{equation*}
    Suppose now that the second case holds. The angle at the vertex $b+x$ is given by $\pi-\theta$. The side lengths are $|I_a|-d_a(\theta)$, $d_\gamma$, and $d(b+d_a(\theta)u,\gamma+x)$. We have 
    \begin{equation*}
        \begin{split}
            d^2(a+b,\gamma+x)&\leq d^2(b+d_a(\theta)u,\gamma+x)\\
            &=(|I_a|-d_a(\theta))^2+d_\gamma^2-2(|I_a|-d_a(\theta))d_{\gamma}\cos(\pi-\theta)\\
            &=|I_a|^2-2|I_a|\sqrt{d^2(A)+d^2(B)-d_\gamma^2\sin^2(\theta)}+d^2(A)+d^2(B)\\
            &\leq |I_a|(|I_a|-2d(A))+d^2(A)+d^2(B)\\
            &\leq d^2(A)+d^2(B).
        \end{split}
    \end{equation*}
    In the second to last step we used that $|I_a|\leq 2d(A)$. 
    
    Finally, we consider with the case where $[b,\gamma]$ and $[0,x]$ are parallel to each other. This is equivalent to the one-dimensional problem of adding the sets $\{0,x\}$ and $\{b,\gamma\}$. Assume that $x\geq 0$. The case where $x\leq 0$ is similar. If $b\leq \gamma$, then
    \begin{equation*}
        \{0,x\}+\{b,\gamma\}=\{b,\gamma,b+x,\gamma+x\}\supseteq\{b,\gamma,x+\gamma\}.
    \end{equation*}
    Then $a+b$ belongs to $[b,\gamma]$ or $[\gamma,x+\gamma]$. In the first case, use that $\gamma\in A+B$ and that $\gamma-b\leq d(B)$ to conclude that $d(a+b,A+B)\leq d(B)$. In the second case use that $\gamma,x+\gamma\in A+B$ and $(x+\gamma)-\gamma=x\leq 2d(A)$ to conclude that $d(a+b,A+B)\leq d(A)$. If $\gamma\leq b$, then 
    \begin{equation*}
        \{0,x\}+\{\gamma,b\}=\{\gamma,b,\gamma+x,b+x\}\supseteq\{\gamma,\gamma+x,b+x\}.
    \end{equation*}
    Then $a+b$ belongs to either $[\gamma,\gamma+x]$ or $[\gamma+x,b+x]$. In the first case, since $\gamma,\gamma+x\in A+B$ and $(\gamma+x)-\gamma=x\leq 2d(A)$ we conclude that $d(a+b,A+B)\leq d(A)$. In the second case, since $\gamma+x\in A+b$ and $(b+x)-(\gamma+x)=b-\gamma\leq d(B)$, we conclude that $d(a+b,A+B)\leq d(B)$. 
    
    Therefore, in any case we have shown that if $a+b\in\textup{conv}(A+B)$, then $d^2(a+b,A+B)\leq d^2(A)+d^2(B)$, which proves the lemma.
\end{proof}

\begin{lemma}\label{sumset_convexhull_two_dimension}
    Let $A$ and $B$ be compact sets in $\mathbb{R}^2$ which contain $0$ and such that $\textup{dim}(A)=\textup{dim}(B)=2$. Suppose that 
    \begin{equation}\label{convex_sumset_without_translates}
        x\in \textup{conv}(A+B)\cap[A+\textup{conv}(B)]^c\cap[\textup{conv}(A)+B]^c.
    \end{equation}
    Then there exist $a_1,a_2\in A$ and $b_1,b_2\in B$ such that $d(a_1,a_2)\leq 2d(A)$, $d(b_1,b_2)\leq 2d(B)$, and $x\in [a_1,a_2]+[b_1,b_2]$. That is, $x$ belongs to a parallelogram with side lengths at most $2d(A)$ and $2d(B)$ respectively, and vertex set in the sumset $A+B$.
\end{lemma}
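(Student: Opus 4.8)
The plan is to study the fibre of decompositions $P_x:=\conv(A)\cap(x-\conv(B))$. Since $\conv(A+B)=\conv(A)+\conv(B)$, the assumption $x\in\conv(A+B)$ makes $P_x$ nonempty, and every $p\in P_x$ gives $x=p+(x-p)$ with $p\in\conv(A)$ and $x-p\in\conv(B)$. The two exclusion hypotheses say exactly that $P_x\cap A=\varnothing$ and $P_x\cap(x-B)=\varnothing$: if some $p\in P_x$ lay in $A$ then $x=p+(x-p)\in A+\conv(B)$, and symmetrically on the other side. So $P_x$ is a nonempty compact convex subset of $\mathbb{R}^2$ avoiding $A$ and avoiding $x-B$; in particular it avoids $\textup{extreme}(\conv(A))\subseteq A$ and $x-\textup{extreme}(\conv(B))$.

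First I would show $P_x$ is two–dimensional. If $P_x$ were a single point $\{p_0\}$, then $\conv(A)$ and $x-\conv(B)$ are two convex bodies meeting only at $p_0$, hence have a common supporting line there with the two bodies on opposite sides. In $\mathbb{R}^2$ the face of $\conv(A)$ exposed in the corresponding outer normal is a point or a segment, and likewise for $\conv(B)$; these faces are parallel, the relevant intersection is $\{p_0\}$, and an analysis of the two cases — using Krein–Milman to place endpoints of such faces inside $A$ and $B$ — forces $p_0\in A$ or $x-p_0\in B$, a contradiction. If $P_x$ were one–dimensional, then (a line meets the boundary of a convex body in at most two points unless it contains an edge) $P_x$ lies inside an edge of $\conv(A)$ or inside an edge of $x-\conv(B)$. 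In the first case, writing that edge as $[a_1,a_2]$ with $a_1,a_2\in A$, the endpoints of $P_x$ cannot be $a_1$ or $a_2$ (that would give $x\in A+\conv(B)$), so each endpoint lies on $\partial(x-\conv(B))$; pushing the picture one step further, $x$ minus an endpoint of the corresponding edge of $\conv(B)$ lands strictly inside $[a_1,a_2]\subseteq\conv(A)$, forcing $x\in\conv(A)+B$. The other case is symmetric. Hence $P_x$ is a convex body.

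Next I would pin down the shape of $P_x$ and extract parallelograms. Each point of $\partial P_x$ lies on $\partial\conv(A)$ or on $\partial(x-\conv(B))$ (a point in both interiors would be in $\textup{int}\,P_x$); since $P_x$ avoids the extreme points of both bodies, such a boundary point lies in the relative interior of an edge, so $\partial P_x$ is covered by finitely many segments and $P_x$ is a convex polygon. At a vertex $v$ of $P_x$ the two incident edges cannot both lie on edges of $\conv(A)$ — that would make $v$ a vertex of $\conv(A)$, hence $v\in A$ — and cannot both lie on edges of $x-\conv(B)$ — that would make $x-v\in B$. So at every vertex one incident edge lies in an edge $[a_1,a_2]$ of $\conv(A)$ (with $a_1,a_2\in A$) and the other lies in an edge of $x-\conv(B)$, i.e.\ $x-v$ lies in an edge $[b_1,b_2]$ of $\conv(B)$ (with $b_1,b_2\in B$); thus $x=v+(x-v)\in[a_1,a_2]+[b_1,b_2]$, a parallelogram with vertex set in $A+B$. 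Edge types alternate around $P_x$, so it has at least four vertices.

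The remaining, and hardest, step is to choose the vertex $v$ so that the side lengths obey $d(a_1,a_2)\leq 2d(A)$ and $d(b_1,b_2)\leq 2d(B)$; a careless choice can fail, since an edge of $\conv(A)$ may have length close to $4d(A)$ (e.g.\ the long side of a thin obtuse triangle). The idea is twofold. First, at a vertex $v$ one may shrink $[a_1,a_2]$ to the endpoints $\alpha,\beta\in A$ of the gap of $A$ along the edge–line that straddles $v$; then still $x\in[\alpha,\beta]+[\alpha',\beta']$, so it suffices to bound $|\alpha-\beta|$. Applying $d(A)=\sup_{c\in\conv(A)}d(c,A)$ at the midpoint of $[\alpha,\beta]$ bounds $|\alpha-\beta|$ by $2d(A)$ \emph{provided} the nearest point of $A$ to that midpoint lies on the edge–line, since it then lies outside the gap, hence at distance at least $|\alpha-\beta|/2$. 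Second, to rule out the bad case — the nearest point of $A$ to the midpoint lying strictly off the edge — I would use that such a point $a^{\circ}\in A$ must satisfy $x-a^{\circ}\notin\conv(B)$ (otherwise $x\in A+\conv(B)$), which pins $a^{\circ}$ just outside $x-\conv(B)$ near $\partial P_x$, together with the combinatorial fact that the exterior angles of the polygon $P_x$ sum to $2\pi$, so some vertex has exterior angle at most $2\pi/N\leq\pi/2$; I expect that at such a sufficiently transverse vertex the off–edge obstruction is excluded on the $A$–side and the $B$–side simultaneously, making the midpoint bound go through. Turning this last paragraph into a clean argument is the crux, and the place where a new idea may be needed.
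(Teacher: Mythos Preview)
Your $P_x=\conv(A)\cap(x-\conv(B))$ framework is a genuinely different route from the paper's, and the first two thirds are essentially correct: $P_x$ is nonempty, avoids $A$ and $x-B$, every extreme point of $P_x$ arises as a transverse intersection of an edge of $\conv(A)$ with an edge of $x-\conv(B)$, and at any such vertex you immediately get $x\in[a_1,a_2]+[b_1,b_2]$ with all four endpoints in $A$ or $B$. (A minor caveat: $\conv(A)$ need not have finitely many edges, so $P_x$ need not be a polygon; but the characterization of extreme points of $P_x$ still goes through, and that is all you actually use.)

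The genuine gap is exactly where you say it is: bounding the side lengths. Your proposed fix does not work. The constraint $a^{\circ}\notin x-\conv(B)$ is just the standing hypothesis $x\notin A+\conv(B)$ applied to $a^{\circ}$; it gives no leverage beyond placing $a^{\circ}$ outside $P_x$, and there is no reason a vertex with small exterior angle should make the off-edge obstruction disappear. Concretely, take $A=\{(-2,0),(2,0),(0,\epsilon)\}$: the gap on the long edge has length $4$ while $d(A)\to 1$ as $\epsilon\to 0$, and the nearest $A$-point to its midpoint is the apex, which lies off the edge-line. If $P_x$ happens to meet $\partial\conv(A)$ only along that long edge, every vertex of $P_x$ gives an $A$-segment of length close to $4d(A)$, regardless of angles.

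The paper sidesteps this entirely with one clean idea that you are missing: do not try to bound an edge-gap of $\conv(A)$ at all. Starting from any $\gamma_1,\gamma_2\in A$ with $x\in[\gamma_1,\gamma_2]+\conv(B)$ (no length control), take the line $\ell$ through $x-\gamma_1$ parallel to $[\gamma_1,\gamma_2]$, and choose $b_1,b_2\in B$ on opposite closed half-planes of $\ell$ \emph{minimizing} $|b_1-b_2|$. Minimality forces the open ball centred at the midpoint $b^*=\tfrac12(b_1+b_2)\in\conv(B)$ with radius $|b_1-b_2|/2$ to contain no point of $B$ (any such point would improve one of the pairs), so $d(B)\ge d(b^*,B)\ge|b_1-b_2|/2$. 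One then checks $x\in[\gamma_1,\gamma_2]+[b_1,b_2]$, and iterates with roles swapped: from $x\in\conv(A)+[b_1,b_2]$ run the same minimization across a line parallel to $[b_1,b_2]$ to produce $a_1,a_2\in A$ with $|a_1-a_2|\le 2d(A)$ and $x\in[a_1,a_2]+[b_1,b_2]$. This ``minimum distance across a line'' lemma is the crux; it replaces your midpoint-on-the-edge argument with a midpoint-of-a-minimizing-pair argument, which needs no control on where the nearest $A$-point sits. You could graft it directly onto your $P_x$ picture at any vertex.
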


\begin{proof}
    We will verify that 
    \begin{equation}\label{convex_sumset_bdry_included}
        \textup{conv}(A+B)=\textup{conv}(A)\cup (\partial \textup{conv}(A)+\textup{conv}(B)).
    \end{equation}
    Since $0\in\textup{conv}(B)$, it is immediate that the right side is contained in the left side. Let $x\in \textup{conv}(A)+\textup{conv}(B)$. Then $x=a+b$, where $a\in\textup{conv}(A)$, $b\in\textup{conv}(B)$. If $x\in\textup{conv}(A)$, then we are done. So, assume that $x\notin \textup{conv}(A)$. Then, since $a\in\textup{conv}(A)$ and $a+b\notin \textup{conv}(A)$, we have $[a,a+b]\cap \partial \textup{conv}(A)\neq\varnothing$. Choose $z\in [a,a+b]\cap \partial \textup{conv}(A)$. Then there exists $\lambda\in[0,1]$ such that $z=\lambda a+(1-\lambda)(a+b)$. This is equivalent to $a+b=(1-\lambda)z+\lambda (b+z)$. That is, $a+b\in z+[0,b]\subseteq \partial \textup{conv}(A)+\textup{conv}(B)$. This verifies (\ref{convex_sumset_bdry_included}). If $\partial\textup{conv}(A)\subseteq A$, the set in (\ref{convex_sumset_without_translates}) is empty by (\ref{convex_sumset_bdry_included}). So we will assume that $\partial\textup{conv}(A)\not\subseteq A$. Using the equation 
    \begin{equation*}
        \partial\textup{conv}(A)=(\partial\textup{conv}(A)\cap A)\cup (\partial\textup{conv}(A)\cap A^c),
    \end{equation*}
    we can write (\ref{convex_sumset_bdry_included}) as
    \begin{equation*}
        \textup{conv}(A+B)=\textup{conv}(A)\cup[\textup{conv}(B)+\partial\textup{conv}(A)\cap A]\cup[\textup{conv}(B)+\partial \textup{conv}(A)\cap A^c].
    \end{equation*}
    Let $x\in \textup{conv}(A+B)$. Then by assumption, $x\notin[\textup{conv}(B)+\partial\textup{conv}(A)\cap A]$, and $x\notin\textup{conv}(A)$. The only choice then is $x\in \textup{conv}(B)+\partial\textup{conv}(A)\cap A^c$. Write $x=a+b$, where $a\in\partial\textup{conv}(A)\cap A^c$, and $b\in\textup{conv}(B)$. We can find $\gamma_1,\gamma_2\in A$ which are different from each other such that $a\in [\gamma_1,\gamma_2]$. Then $x=a+b\in \textup{conv}(B)+[\gamma_1,\gamma_2]$. Let $L_x$ be the line which passes through $x$ and is parallel to the interval $[\gamma_1,\gamma_2]$. Then $L_x$ intersects the sets $\textup{conv}(B)+\gamma_1$ and $\textup{conv}(B)+\gamma_2$ at translates of some interval $I_x\subseteq\textup{conv}(B)$. We have $x\in I_x+[\gamma_1,\gamma_2]$. Now, let $L_x^{+}$ and $L_x^{-}$ represent the closed half spaces bounded by $L_x$. We will show that there exist $b_1,b_2\in B$ such that $b_1+\gamma_1\in L_x^+$, $b_2+\gamma_1\in L_x^-$, and $d(b_1,b_2)\leq 2d(B)$. We make the following claim:
    \begin{claim}
        There exist $b_1,b_2\in B$ which satisfy
        \begin{equation*}
            d(b_1,b_2)=\min_{\substack{b_1^{\prime}+\gamma_1\in L_x^+\cap(B+\gamma_1)\\b_2^{\prime}+\gamma_1\in L_x^-\cap(B+\gamma_1)}}d(b_1^{\prime},b_2^{\prime}).
        \end{equation*}
    \end{claim}
    \begin{proof}
        Define $R^+$ and $R^-$ by $R^{\pm}:=L_x^{\pm}\cap(B+\gamma_1)$. We must first show that $R^+$ and $R^-$ are nonempty. By the Krein-Milman theorem $\textup{extreme}(\textup{conv}(B))\subseteq B$. Suppose first that $L_x$ is a supporting hyperplane of $\textup{conv}(B)+\gamma_1$. Then the set $L_x\cap(\textup{conv}(B)+\gamma_1)$ contains extreme points in $\textup{conv}(B)+\gamma_1$, and so points in $B+\gamma_1$. So, the sets $R^{\pm}$ must be nonempty. Now suppose that $L_x$ is not a supporting hyperplane. Then the sets $S^{\pm}:=(\textup{conv}(B)+\gamma_1)\cap L_x^{\pm}$ are convex sets (contained in $\textup{conv}(B)+\gamma_1$), and therefore are the convex hulls of their extreme points. Since both $S^{\pm}$ contain interior points of $\textup{conv}(B)+\gamma_1$, each of them must have at least one extreme point from $\textup{conv}(B)+\gamma_1$, and therefore each set has points in $B+\gamma_1$. This verifies that the sets $R^{\pm}$ are nonempty. Denote the right side minimum by $\alpha$. Consider a sequence $d_k:=d(b_1^k,b_2^k)$ such that $d_k\rightarrow\alpha$, and $b_1^k+\gamma_1\in R^+$, $b_2^k+\gamma_1\in R^-$. By compactness, we can assume that (up to subsequences) $b_1^k\rightarrow b_1\in B$ and $b_2^k\rightarrow b_2\in B$. Then
        \begin{equation*}
            |b_1-b_2|\leq |b_1-b_1^k|+|b_2^k-b_2|+|(b_1^k-b_2^k)|\rightarrow \alpha.
        \end{equation*}
        That is, $|b_1-b_2|\leq\alpha$. But, $b_1+\gamma_1\in R^+$ and $b_2+\gamma_1\in R^-$. So, we must have $|b_1-b_2|\geq\alpha$. This proves $|b_1-b_2|=\alpha$.
    \end{proof}
    Now, with the existence of $b_1,b_2$ established, we will show that $d(b_1,b_2)\leq 2d(B)$. Suppose for contradiction that $d(b_1,b_2)>2d(B)$. Then $b^*:=\frac{1}{2}(b_1+b_2)\in\textup{conv}(B)$, and the ball $B(b^*,d(b^*,b_1))$ does not have any interior points which belong to $B$ (or else we contradict the minimality of $d(b_1,b_2)$). But then $d(b^*,B)=d(b^*,b_1)>d(B)$, which is a contradiction. So we must have $d(b_1,b_2)\leq 2d(B)$. It remains to show that $x\in[\gamma_1,\gamma_2]+[b_1,b_2]$. First, we note that $[b_1,b_2]\cap I_x=\{y\}$. We also know that $x\in I_x+[\gamma_1,\gamma_2]$. Since $x\notin A+\textup{conv}(B)$ we have $x\notin I_x+\gamma_1$ and $x\notin I_x+\gamma_2$. Then 
    \begin{equation*}
    x\in[y+\gamma_1,y+\gamma_2]=y+[\gamma_1,\gamma_2]\subseteq [b_1,b_2]+[\gamma_1,\gamma_2].
    \end{equation*}
    Now, $x\in [\gamma_1,\gamma_2]+[b_1,b_2]\subseteq \textup{conv}(A)+[b_1,b_2]$. By repeating the exact same argument as was shown above, we can find $a_1,a_2\in A$ such that $d(a_1,a_2)\leq 2d(A)$ and $x\in [a_1,a_2]+[b_1,b_2]$. This completes the proof.
\end{proof}

\begin{proof}[Proof of Theorem \ref{dyn_farkhi_2_dimension}]
Let $A$ and $B$ be compact in $\mathbb{R}^2$. If $\textup{dim}(A)=0$, then using translation invariance we have $d^2(A+B)=d^2(B)=d^2(A)+d^2(B)$. The same holds if $\textup{dim}(B)=0$. Suppose now that $\textup{dim}(A)=1$ and that $\textup{dim}(B)\in\{1,2\}$. By Lemma \ref{one_set_one_dimensional} we have $d^2(A+B)\leq d^2(A)+d^2(B)$. The same holds if $\textup{dim}(B)=1$ and $\textup{dim}(A)\in\{1,2\}$. The only case we are left with is $\textup{dim}(A)=\textup{dim}(B)=2$. Let $x\in \textup{conv}(A+B)$. If $x\in [A+\textup{conv}(B)]\cup [\textup{conv}(A)+B]$, then we use Lemma \ref{one_set_convex_lemma} to obtain $d^2(x,A+B)\leq \max\{d^2(A),d^2(B)\}\leq d^2(A)+d^2(B)$. Otherwise, 
\begin{equation*}
    x\in \textup{conv}(A+B)\cap[A+\textup{conv}(B)]^c\cap[\textup{conv}(A)+B]^c.
\end{equation*}
By Lemma \ref{sumset_convexhull_two_dimension} $x$ must belong to a parallelogram with vertex set in $A+B$ and side lengths at most $2d(A)$ and $2d(B)$ respectively. By Lemma \ref{hausdorff_distance_parallelogram} the Hausdorff distance to convex hull of the vertex set of a parallelogram is maximized for rectangles. Therefore we must have (using that the Hausdorff distance to convex hull of the vertex set of a rectangle is obtained in its center), denoting the parallelogram containing $x$ by $P$,
\begin{equation*}
    \begin{split}
        d^2(x,A+B)&\leq d^2(x,\textup{vert}(P))\\
        &\leq\left(\frac{2d(A)}{2}\right)^2+\left(\frac{2d(B)}{2}\right)^2\\
        &=d^2(A)+d^2(B).
    \end{split}
\end{equation*}
It follows that $d^2(A+B)\leq d^2(A)+d^2(B)$. This proves the theorem.
\end{proof}

\section{Open questions}\label{open_questions}

We conclude with some suggestions for further research.

A \textit{centrally symmetric} convex body $K\subseteq \mathbb{R}^n$ is a convex body for which $x\in K$ implies $-x\in K$. We define the generalized Hausdorff distance to convex hull by
\begin{equation*}
    d^{(K)}(A):=\inf\{r\geq 0: \textup{conv}(A)\subseteq A+rK\}.
\end{equation*}
Equivalently
\begin{equation*}
    d^{(K)}(A)=\sup_{a\in\textup{conv}(A)}\inf_{x\in A}\|x-a\|_{K},
\end{equation*}
where $\|\cdot\|_{K}$ is the norm induced by $K$. It is interesting to ask if Theorem \ref{dyn_farkhi_2_dimension} holds in this general setting.

\begin{question}
    Let $K$ be a centrally symmetric convex body in $\mathbb{R}^2$. What is the best upper bound for $d^{(K)}(A+B)$ in terms of $d^{(K)}(A)$ and $d^{(K)}(B)$ in the same sense given in Theorem \ref{dyn_farkhi_2_dimension}?
\end{question}
Due to Lemma \ref{sumset_convexhull_two_dimension} the above question can be resolved by finding a formula for $d^{(K)}(\textup{vert}(P))$, where $P$ is a parallelogram.

Recall from the introduction that the Dyn-Farkhi conjecture has not been resolved (when $n\geq3$) for the case $A=B$. One possible approach to this would be to extend Lemma \ref{sumset_convexhull_two_dimension} to a statement about sumsets in $\mathbb{R}^n$. Such a statement would be interesting even on its own.

\begin{question}
    For an integer $n\geq3$ let $A$ and $B$ be full dimensional compact sets in $\mathbb{R}^n$. Can we generalize Lemma \ref{sumset_convexhull_two_dimension} to achieve a characterization for the sumset $\textup{conv}(A+B)$?
\end{question}

To prove Lemma \ref{sumset_convexhull_two_dimension} we relied on the fact that the geometry of the boundary of the convex hull of a 2 dimensional set is relatively simple (we are dealing with points and intervals). But, the complication when trying to generalize to $n\geq3$ is that the faces of the convex hull can now be $2$ dimensional and higher with a much more complicated structure, so it seems unlikely that we can achieve a nice characterization with parallelograms.

\newpage
\bibliography{hausdorff_distance}
\bibliographystyle{abbrv}

\end{document}